\newcommand{\Q}{\mathbb{Q}}
\newcommand{\N}{\mathbb{N}}
\newcommand{\Z}{\mathbb{Z}}
\newcommand{\R}{\mathbb{R}}
\newcommand{\F}{\mathbb{F}}
\newcommand{\C}{\mathbb{C}}
\newcommand{\Gal}{\mathrm{Gal}}
\newcommand{\ppcm}{\mathrm{ppcm}}
\newcommand{\pgcd}{\mathrm{pgcd}}
\newcommand{\h}{\mathrm{h}}
\theoremstyle{plain}
\newtheorem{thm}{Théorème}[section]
\newtheorem{prop}[thm]{Proposition}
\newtheorem{cor}[thm]{Corollaire}
\newtheorem{lmm}[thm]{Lemme}
\newtheorem{conj}[thm]{Conjecture}
\newtheorem{pb}[thm]{Problème}
\newtheorem{hyp}[thm]{Hypothèse}
\newtheorem*{lmm*}{Lemme}
\newtheorem*{thm*}{Théorème}
\newtheorem*{claim*}{Fait}
\theoremstyle{plain}
\newtheorem{rqu}[thm]{Remarque}
\theoremstyle{plain}
\newtheorem{defn}[thm]{Définition}
\newtheorem{ex}[thm]{Exemple}
\begin{document}

\selectlanguage{french}

\title[Minoration de la hauteur dans un compositum de corps de rayon]{Minoration de la hauteur de Weil dans un compositum de corps de rayon}
\date{}
\author{Arnaud Plessis}
\address{Universit\'e de Caen Normandie}
\email{arnaud.plessis@unicaen.fr}
\maketitle

\begin{abstract}
L'étude de la propriété $(B)$ a été motivée par la conjecture de Lehmer. 
Un corps algébrique a la propriété $(B)$ s'il existe une constante $c$ strictement positive telle que tout élément de ce corps d'ordre infini ait une hauteur supérieure à $c$.

Dans cet article, sous certaines conditions d'uniformité, on donnera un compositum d'une famille de corps ayant la propriété (B). 
\end{abstract}

\selectlanguage{english}

\begin{abstract}
The study of Property (B) starts as a special case of Lehmer's conjecture. An algebraic field is said to satisfy Property (B) if there exists a positive constant bounding by below the height of every point of infinite order.

In this paper we prove that, under certain uniformity conditions, the compositum of a familly of fields satisfies Property (B).
\end{abstract}

\selectlanguage{french}
 
\section{Introduction.} 

 Notons $\h$ la hauteur logarithmique et absolue de Weil (cf la section \ref{hauteur de Weil} pour les détails).
 Notons également $\mu_\infty$ l'ensemble des racines de l'unité de $\overline{\Q}$.
Une question importante concernant cette hauteur est la conjecture de Lehmer (cf \cite[ §13]{LehmerFactorizationofcertaincyclotomicfunctions}).    
  
  \begin{conj}[Lehmer]
  
  Il existe $c>0$ tel que pour tout $x\in\overline{\Q}^*\backslash\mu_\infty$, on ait: 
 \begin{equation*}
 \h(x)\geq\frac{c}{[\Q(x):\Q]}.
 \end{equation*}
  \end{conj}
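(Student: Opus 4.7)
Il convient de signaler d'emblée qu'il s'agit d'une conjecture ouverte depuis 1933 ; la meilleure minoration inconditionnelle connue reste celle de Dobrowolski, qui laisse un facteur parasite $\bigl(\log\log d/\log d\bigr)^3$ à côté de la borne en $c/d$. Je décris donc la stratégie qui a produit ce résultat, en indiquant précisément l'endroit où elle bute et qui fait que la conjecture demeure ouverte.

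Le plan consiste à fixer $x\in\overline{\Q}^*\setminus\mu_\infty$, à poser $d=[\Q(x):\Q]$, et à travailler simultanément avec les $d$ conjugués de $x$. On choisit un ordre d'annulation $T$ et un intervalle de premiers $[N,2N]$, puis on construit par un lemme de Siegel un polynôme auxiliaire non nul $P\in\Z[X]$ s'annulant à l'ordre au moins $T$ en $x$, dont le degré et la hauteur sont contrôlés en fonction de $T$ et $d$. L'idée-clé est ensuite d'exploiter, pour chaque premier $p\in[N,2N]$, la congruence $P(x)^p\equiv P(x^p)\pmod{p}$ dans l'anneau des entiers de $\Q(x)$ : si $\h(x)$ est assez petite, alors $P(x^p)$ est de petite hauteur, et pour suffisamment de ces premiers on force $P(x^p)=0$ via la formule du produit et une estimation à la Liouville. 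Comme $\deg P$ est petit devant le nombre de premiers dans $[N,2N]$ fois $T$, on aboutit à une contradiction.

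L'étape délicate est l'optimisation des paramètres $T$ et $N$ en fonction de $\h(x)$ et $d$. C'est précisément dans cet équilibre qu'apparaît le facteur logarithmique parasite : le nombre de premiers dans $[N,2N]$ vaut $\sim N/\log N$ par le théorème des nombres premiers, et la hauteur du polynôme auxiliaire reçoit une contribution en $\log N$ à chaque évaluation. L'obstacle principal, et ce qui empêche toute preuve connue d'atteindre une constante absolue $c$, est qu'aucune méthode ne permet d'éliminer ce coût logarithmique dans toute sa généralité. Un gain jusqu'à $c/d$ nécessiterait soit un invariant galoisien nouveau remplaçant l'action du Frobenius sans coût logarithmique, soit le traitement complet du cas réciproque auquel Smyth a ramené le problème en minorant absolument la hauteur des entiers algébriques non réciproques.

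Le présent article contourne cette difficulté plutôt que de l'affronter : on se restreint à des corps à structure galoisienne riche (composés de corps de rayon), cadre dans lequel la structure abélienne supplémentaire fournit l'information arithmétique manquante et permet d'établir la propriété (B) — version uniforme de Lehmer — sans avoir à vaincre l'obstacle décrit ci-dessus.
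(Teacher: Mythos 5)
Vous avez raisonné correctement : l'énoncé en question est la conjecture de Lehmer, que l'article ne démontre pas — il la présente comme une conjecture ouverte, exactement comme vous le dites, et rappelle aussitôt après le théorème de Dobrowolski comme meilleur résultat inconditionnel connu. Il n'y a donc pas de preuve dans le papier à laquelle comparer la vôtre, et votre refus explicite d'en fabriquer une est la seule réponse honnête. Votre résumé de la stratégie de Dobrowolski (polynôme auxiliaire via lemme de Siegel, congruence de Frobenius $P(x)^p\equiv P(x^p)\pmod p$, comptage de zéros contre formule du produit, coût logarithmique dans l'optimisation de $T$ et $N$) est fidèle et identifie correctement le verrou qui empêche d'atteindre la borne $c/d$. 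Une seule précision : la réduction de Smyth au cas des polynômes réciproques ne « résout » pas le problème si on la complète ; elle montre que la conjecture se ramène à ce cas, qui reste précisément le cas difficile. Votre dernière remarque sur la manière dont l'article contourne l'obstacle en se plaçant dans des corps à structure galoisienne contrôlée (corps de rayon, propriété (B)) est une lecture exacte de la motivation du papier.
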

 
  Actuellement, le meilleur résultat connu est dû à Dobrowolski : 
  
  \begin{thm}[Dobrowolski, \cite{OnaquestionofLehmerandthenumberofirreduciblefactorsofapolynomial}]
  Il existe $c>0$ tel que pour tout $x\in \overline{\Q}^*\backslash \mu_\infty$, 
  \begin{equation*}
 \h(x)\geq \frac{c}{D}\left(\frac{\log\log (3D)}{\log (2D)}\right)^3
  \end{equation*}
  où $D=[\Q(x) : \Q]$.
  \end{thm} 
  
  Afin de mieux comprendre cette conjecture, Bombieri et Zannier ont introduit la notion de propriété (B) (pour Bogolomov) ci-dessous. 
  
  \begin{defn}
  On dira qu'un corps $K\subset\overline{\Q}$ satisfait (ou a) la propriété (B) s'il existe un réel $T_0>0$ tel que l'ensemble
  \begin{equation*}
  \mathcal{A}(T_0)=\{\alpha\in K^*\vert\; \h(\alpha)\leq T_0\}
  \end{equation*}
  ne contienne que les racines de l'unité.
  \end{defn}
  
 Nous allons maintenant nous intéresser à certaines familles de corps qui ont la propriété (B). 
  D'après un théorème de Northcott, les corps de nombres ont la propriété (B).
  Ainsi, la notion de propriété (B) est intéressante uniquement lorsque l'on a des extensions infinies de $\Q$. 
  Citons quelques exemples de corps ayant la propriété (B).
   Le premier concerne les corps totalement réels (un corps $K$ est dit totalement réel si pour tout plongement $\sigma : K\hookrightarrow \C$, son image est dans $\R$). 
 
  \begin{thm}[Schinzel, \cite{SchinzelOntheproductoftheconjugatesoutsidetheunitcircleofanalgebraicnumber} et Smyth, \cite{Onthemeasureoftotallyrealalgebraicintegers}] \label{Schinzel}
  
  Soit $K$ un corps totalement réel. Alors pour tout $x\in K^*\backslash\mu_\infty$, on a:
\begin{equation*}
 \h(x)\geq \frac{1}{2} \log\left(\frac{1+\sqrt{5}}{2}\right).
\end{equation*}
  \end{thm}

Le prochain exemple est un analogue $p$-adique du théorème \ref{Schinzel}. 
On dit que $\alpha\in\overline{\Q}$ est un nombre totalement $p$-adique si $p$ est totalement décomposé dans $\Q(\alpha)$. 
Le corps $\Q^{tp}$ constitué de tous les nombres algébriques totalement $p$-adiques est alors une extension galoisienne de $\Q$. 

Soient $p$ un nombre premier et $K$ une extension algébrique de $\Q$. 
On dira que $K$ se plonge dans un corps local $M$ si, pour toute place $\nu$ de $K$ au-dessus de $p$, le complété de $K$ par rapport à $\nu$ est inclus dans $M$.
Par exemple, $\Q^{tp}$ se plonge dans $\Q_p$. 
Le prochain théorème montre que $\Q^{tp}$ a la propriété (B). 
Plus généralement:
  
\begin{thm}[Bombieri-Zannier, Theorem 2, \cite{BombieriZannierAnoteonheightsincertaininfiniteextensionsof}] \label{Thm 2 Bombieri-Zannier}
Soient $L/\Q$ une extension galoisienne et $p$ un nombre premier. Si $L$ peut être plongé dans une extension finie de $\Q_p$, alors $L$ a la propriété (B).
 \end{thm}
 
 Le prochain résultat montre que les extensions abéliennes de $\Q$ ont la propriété (B).
\begin{thm} [Amoroso-Dvornicich, \cite{MR1740514}] \label{Amoroso Dvornicich}

Pour tout $x\in (\Q^{ab})^*\backslash\mu_\infty$, on a: 
\begin{equation*} 
\h(x)\geq (\log 5)/12.
\end{equation*}

\end{thm} 
Dans \cite[Theorem 1]{HabeggerSmallheightandinfinitenonabelianextensions}, Habegger a montré l'analogue du théorème \ref{Amoroso Dvornicich} dans le cas des courbes elliptiques.
Plus précisément, si $E$ est une courbe elliptique définie sur $\Q$ sans multiplication complexe, alors $\Q(E_\mathrm{tors})$ a la propriété (B) où $E_\mathrm{tors}$ désigne l'ensemble des points de torsion de $E$.

Dans \cite[Theorem 1.2]{AmorosoZannierAuniformrelativeDobrowolskislowerboundoverabelianextensions}, les auteurs ont étendu le théorème \ref{Amoroso Dvornicich} en montrant que pour tout corps de nombres $K$ de degré $d$ sur $\Q$ et tout $x\in (K^{ab})^*\backslash\mu_\infty$, on a: 
\begin{equation*} 
\h(x)>3^{-d^2-2d-6}.
\end{equation*}
  On peut se demander si l'hypothèse "$K$ est un corps de nombres" peut être remplacée par "$K$ a la propriété (B)".
  Cela n'est pas possible en général.
   Dans \cite[Theorem 5.3]{amorosodavidzannieronfieldswiththepropertyB}, les auteurs donnent un contre-exemple en prenant pour $K$ le corps $\Q^{tr}$ (le compositum de toutes les extensions totalement réelles), qui possède la propriété (B) par le théorème \ref{Schinzel}, et montrent que $\Q^{tr}(i)\subset K^{ab}$ n'a pas la propriété (B).
   
  \begin{defn}
Soient $K\subset \overline{\Q}$ un corps et $\nu$ une place finie de $K$.
 On dit qu'une extension algébrique $L/K$ a un degré local borné en $\nu$ s'il existe un entier positif $B(\nu)$ tel que $[L_w : K_\nu]\leq B(\nu)$ pour toute place finie $w$ de $L$ étendant $\nu$.
\end{defn}

 On a vu que si $K$ est un corps ayant la propriété (B), alors $K^{ab}$ n'a pas forcément la propriété (B). 
 On peut alors se poser la même question en remplaçant "corps ayant la propriété (B)" par "corps ayant un degré local borné en un premier $p$". 
 Cela conduit au problème suivant dont une réponse partielle a été donnée dans \cite{amorosodavidzannieronfieldswiththepropertyB}:
 
\begin{pb}
Si une extension algébrique $L/\Q$ a un degré local borné en un premier $p$ fixé, est-il vrai que $L^{ab}$ a la propriété (B) ? Plus généralement, si $p$ est un nombre premier et $(L_i)_i$ une suite de corps algébriques ayant tous un degré local borné en $p$, est-il vrai que le compositum des $L_i^{ab}$ a la propriété (B) ?
\end{pb}

De manière générale, la seconde affirmation est fausse. 
Pour $p$ fixé, il suffit de prendre la suite $(L_i)_i$ de tous les corps de nombres.
Ainsi, le compositum des $L_i^{ab}$ est égal à $\overline{\Q}$ qui n'a pas la propriété $(B)$. 

Dans cet article, on imposera en plus que la suite $([L_i : \Q])_i$ est majorée. 
On étudiera donc le problème suivant.
  
\begin{pb}
Soit $(K_n)_{n}$ une suite de corps de nombres telle que la suite $([K_n:\Q])_n$ est majorée. 
 Est-il vrai que le compositum de tous les $K_n^{ab}$ a la propriété (B) ?   
\end{pb}

Pour un corps de nombres $K$, on note $H(K)$ le corps de Hilbert de $K$.
  Galateau a considéré un cas particulier de cette question. Il a montré que :

\begin{thm} [Galateau, §5, \cite{GalateauSmallheightinfieldsgeneratedbysingularmoduli}]\label{Gal(B)}
Soit $(K_n/\Q)_n$ une suite d'extensions finies galoisiennes. Si :

\begin{enumerate} [i)]
\item la suite $([K_n : \Q])_n$ est majorée;
\item il existe un premier $p$ inerte dans tous les $K_n$;
\item les discriminants des $K_n$ sont deux à deux premiers entre eux;
\end{enumerate}
alors le compositum $L$ de tous les $H(K_n)$ satisfait la propriété (B). 
\end{thm}   
  
  La condition $iii)$ est assez restrictive. 
  Par un théorème de Dirichlet (cf \cite[Chapter III, theorem 22]{FMJT}), cela implique que tout premier $q\in\Q$ doit être ramifié dans au plus un des $K_n$. 
Galateau a conjecturé qu'à l'aide d'une étude locale, on pouvait se passer de cette condition.

   Soit $K$ un corps de nombres ou une extension finie de $\Q_p$. 
   Notons $\mathcal{O}_K$ son anneau des entiers. 
   
   Notre résultat principal consiste à supprimer la condition $iii)$ du théorème \ref{Gal(B)} et à affaiblir la condition $ii)$ en supposant seulement qu'il existe un unique idéal premier de $\mathcal{O}_{K_n}$ au-dessus de $p$. 
   En particulier, on autorise $p$ à être ramifié dans $K_n$.
   
  \begin{thm} \label{AmelioGal}
Soit $(K_n/\Q)_n$ une suite d'extensions finies telle que la suite $([K_n : \Q])_n$ est majorée. Supposons qu'il existe un premier $p\in\Q$ tel que pour tout $n$, il existe un unique idéal premier $\mathfrak{p}_n$ de $\mathcal{O}_{K_n}$ au-dessus de $p$. Alors le compositum $L$ de tous les $H(K_n)$ satisfait la propriété (B). 
\end{thm}
 
 Par définition, $H(K_n)$ est l'extension abélienne maximale de $K_n$ non-ramifiée.
 Cependant, dans la preuve du théorème \ref{AmelioGal}, on utilisera uniquement le fait que pour tout $n$, $\mathfrak{p}_n$ ne se ramifie pas dans $H(K_n)$.
 Cela nous a donc amené à généraliser le théorème \ref{AmelioGal} en remplaçant le corps de Hilbert par les corps de rayons :
 
\begin{thm} \label{version quantitative du thm principal.}
Soient $p\in \Q$ un nombre premier, $K$ un corps de nombres et $\mathfrak{p}$ un idéal premier de $\mathcal{O}_K$ au-dessus de $p$.
Soit $(K_n/K)_{n\in\N}$ une suite d'extensions finies telle que la suite $([K_n : K])_n$ est majorée et telle que pour tout $n$, il existe un unique idéal premier $\mathfrak{p}_n$ de $\mathcal{O}_{K_n}$ au-dessus de $\mathfrak{p}$.
Fixons $M\geq 1$ un entier.
Pour $n\in\N$, notons 
\begin{equation*}
\Omega_n=\{\text{idéaux}\; \mathfrak{m}\subset \mathcal{O}_{K_n} \vert \; \exists m\leq M, \; p\nmid m, \; \text{et} \;\mathfrak{m}\; \text{au-dessus de m}\}.
\end{equation*}
Pour $\mathfrak{m}\in\Omega_n$, notons $K_{n,\mathfrak{m}}$ le corps de rayon de $K_n$ associé à $\mathfrak{m}$.
Alors le compositum $L$ de tous les $K_{n,\mathfrak{m}}$ a la propriété $(B)$.
\end{thm}  

\vspace{0.1cm}
 
Une fois cet article prépublié, Pottmeyer m'a signalé qu'il avait récemment montré (cf \cite[Theorem 2.3]{Smalltotally}) un cas particulier du théorème \ref{AmelioGal} en prenant pour $K_n$ le corps $\Q(\sqrt{-q_n})$ avec $q_n\in\Q$ un premier tel qu'il existe un premier $p$ inerte dans tous les $K_n$ (cf aussi l'exemple \ref{ex Galateau}). 

Sa méthode peut probablement se généraliser au cas des extensions $K_n/\Q$ de degrés quelconques, mais toujours sous les hypothèses que la suite $([K_n : \Q])_n$ est majorée, que $K_n/\Q$ est galoisienne et qu'il existe un premier $p$ inerte dans tous les $K_n$.

\section*{Remerciements}

Je tiens à remercier chaleureusement S. Checcoli ainsi que I. Del Corso pour les conversations instructives que l'on a eues et qui ont permis l'amélioration de cet article. 
Je remercie également L. Pottmeyer pour nos échanges pertinents qui ont été d'une grande aide.
Je remercie également F. Amoroso pour ses conseils tout au long de l'élaboration de cet article. 
Enfin, je remercie l'arbitre pour ses précieuses remarques aussi pertinentes qu'utiles. 

\section{Rappels et notations.}      
      
\subsection{Hauteur de Weil.} \label{hauteur de Weil}

 Soient $K$ un corps de nombres, $x\in K$ et $\mathfrak{p}$ un idéal premier de $\mathcal{O}_K$ au-dessus d'un premier $p\in \Q$. 
Notons $v_\mathfrak{p}(x)$ la valuation de $x$ en $\mathfrak{p}$ (i.e. l'exposant de $\mathfrak{p}$ dans la décomposition en idéaux premiers de $x\mathcal{O}_K$) et $\vert x\vert_\mathfrak{p}=p^{-v_\mathfrak{p}(x)/v_\mathfrak{p}(p)}$.
L'ensemble des places finies de $K$ est noté $\mathcal{M}^0(K)$ et celui des places infinies $\mathcal{M}^\infty(K)$. 
Notons également $\mathcal{M}(K)=\mathcal{M}^0(K) \cup \mathcal{M}^\infty(K)$.
Pour tout $\nu\in\mathcal{M}^0(K)$, on confond $\nu$ et son idéal premier associé. 
De même, pour tout $\nu\in \mathcal{M}^\infty(K)$, on confond $\nu$ et ses plongements associés. 
 
 Soit $\nu$ une place de $K$. 
 On note $K_\nu$ le complété de $K$ en $\nu$. 
 Si $\nu\in\mathcal{M}^0(K)$, on note $d_\nu$ le degré local $[K_\nu:\Q_p]$ où $p$ est le premier au-dessous de $\nu$. 
De la même façon, si $\nu\in\mathcal{M}^\infty(K)$, on note $d_\nu=[K_\nu : \R]$.
 
 Avec ces différentes notations, on a :
 
 \begin{prop}[Formule du produit, Proposition 1.4.4, \cite{BombieriandGublerHeightsinDiophantineGeometry}]
  Pour tout $x\in K^*$, 
  \begin{equation*}  
 \sum_{\nu\in\mathcal{M}(K)}d_\nu\log\vert x\vert_\nu=0.
  \end{equation*}
 \end{prop}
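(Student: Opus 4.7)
Le plan est de se ramener au cas $K=\Q$ via la fonction norme $N_{K/\Q}$. Le cas $K=\Q$ est élémentaire: pour $x\in\Q^*$, la décomposition $x=\pm\prod_p p^{e_p}$ donne $|x|_\infty=\prod_p p^{e_p}$ et $|x|_p=p^{-e_p}$, et comme $d_\nu=1$ pour toute place $\nu$ de $\Q$, la somme $\sum_\nu d_\nu\log|x|_\nu$ vaut trivialement $0$.

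Pour $K$ un corps de nombres quelconque et $x\in K^*$, je regrouperais les places de $K$ selon les places de $\Q$ qu'elles divisent, et j'établirais pour chaque place $\nu_0\in\mathcal{M}(\Q)$ l'identité locale
\[
\sum_{\nu\mid\nu_0} d_\nu\log|x|_\nu=\log|N_{K/\Q}(x)|_{\nu_0}.
\]
Aux places finies ($\nu_0=p$), je partirais de la factorisation idéale $x\mathcal{O}_K=\prod_\mathfrak{p}\mathfrak{p}^{v_\mathfrak{p}(x)}$: en prenant la norme d'idéaux et en utilisant $N(\mathfrak{p})=p^{f_\mathfrak{p}}$, on obtient $v_p(N_{K/\Q}(x))=\sum_{\mathfrak{p}\mid p}f_\mathfrak{p} v_\mathfrak{p}(x)$. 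Avec les normalisations $|x|_\mathfrak{p}=p^{-v_\mathfrak{p}(x)/e_\mathfrak{p}}$ et $d_\mathfrak{p}=e_\mathfrak{p} f_\mathfrak{p}$, le facteur $e_\mathfrak{p}$ se simplifie et l'identité s'ensuit. Aux places archimédiennes, on utilise $|N_{K/\Q}(x)|=\prod_\sigma|\sigma(x)|$ où $\sigma$ parcourt les $[K:\Q]$ plongements complexes; pour une place complexe $\nu$ associée à une paire $\{\sigma,\bar\sigma\}$, le produit $|\sigma(x)||\overline{\sigma(x)}|=|\sigma(x)|^2$ produit le facteur $2$ qui coïncide exactement avec $d_\nu=[K_\nu:\R]=2$, tandis qu'une place réelle contribue $d_\nu=1$ et un seul plongement.

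La conclusion s'obtient en sommant sur toutes les places de $\Q$:
\[
\sum_{\nu\in\mathcal{M}(K)}d_\nu\log|x|_\nu=\sum_{\nu_0\in\mathcal{M}(\Q)}\log|N_{K/\Q}(x)|_{\nu_0}=0,
\]
la dernière égalité étant précisément le cas $K=\Q$ appliqué à $N_{K/\Q}(x)\in\Q^*$. L'étape la plus délicate est la vérification de l'identité locale aux places finies, qui demande un calcul soigneux avec les indices $e_\mathfrak{p}$ et $f_\mathfrak{p}$ et leur compatibilité avec les normalisations choisies; une fois ce lemme établi, le reste se réduit à un regroupement formel des places.
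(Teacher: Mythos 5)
The paper does not prove this proposition; it is quoted directly as Proposition 1.4.4 of Bombieri--Gubler, so there is no internal proof to compare against. Your argument is the standard and correct one: you reduce to the elementary case $K=\Q$ via the local identities $\sum_{\nu\mid\nu_0} d_\nu\log|x|_\nu=\log|N_{K/\Q}(x)|_{\nu_0}$, established at finite places through $N(\mathfrak{p})=p^{f_\mathfrak{p}}$, $v_\mathfrak{p}(p)=e_\mathfrak{p}$ and $d_\mathfrak{p}=e_\mathfrak{p}f_\mathfrak{p}$ (the $e_\mathfrak{p}$ cancels exactly as you say), and at archimedean places through $|N_{K/\Q}(x)|=\prod_\sigma|\sigma(x)|$ with the doubling for complex conjugate pairs matching $d_\nu=2$. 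The normalizations you use agree with those recalled in Section 2.1 of the paper. This is essentially the proof in the cited reference; the only point worth flagging for a fully rigorous write-up is the need to verify explicitly that for $x\in\mathcal{O}_K\setminus\{0\}$ the ideal norm $N_{K/\Q}(x\mathcal{O}_K)$ equals $|N_{K/\Q}(x)|\,\Z$, which is the lemma hiding behind your phrase \og en prenant la norme d'idéaux\fg.
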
  

Définissons maintenant la hauteur logarithmique et absolue de Weil.

\begin{defn}
Soient $x\in\overline{\Q}^*$ et $K$ un corps de nombres le contenant. On définit la hauteur logarithmique et absolue de Weil, notée $\h(x)$, par: 
\begin{equation*}
\h(x)=\frac{1}{[K:\Q]}\sum_{\nu\in\mathcal{M}(K)}d_\nu\log\max\{1,\vert x\vert_\nu\}.
\end{equation*}
\end{defn}

\begin{rqu}
Cette hauteur est appelée absolue car elle ne dépend pas du corps contenant $x$ (cf \cite[§1.5]{BombieriandGublerHeightsinDiophantineGeometry}).
\end{rqu}

Pour simplifier, on désigne par le mot "hauteur" la hauteur logarithmique et absolue de Weil. 
Cette hauteur possède d'autres propriétés importantes comme celles ci-dessous (cf \cite[Proposition 1.5.17, Lemma 1.5.18]{BombieriandGublerHeightsinDiophantineGeometry}):

\begin{prop}
Soient $x\in \overline{\Q}$ et $\lambda\in\Z$. 
Alors $\h(x^\lambda)=\vert\lambda\vert \h(x)$ et, pour tout $\sigma \in \Gal(\overline{\Q}/\Q)$, on a $\h(\sigma(x))=\h(x)$.
\end{prop}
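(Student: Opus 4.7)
Le plan est de choisir un corps de nombres $K\subset\overline{\Q}$ contenant $x$ (quitte à le remplacer par sa clôture galoisienne pour la seconde identité) et de vérifier les deux égalités place par place, en les ramenant à des manipulations élémentaires sur les valeurs absolues $|\cdot|_\nu$. On utilise librement le fait que $\h$ ne dépend pas du corps de nombres choisi pour la calculer.

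Pour l'identité $\h(x^\lambda)=|\lambda|\h(x)$, on procède en deux temps. Le cas $\lambda\geq 0$ est immédiat: la multiplicativité de chaque $|\cdot|_\nu$ donne $\max\{1,|x^\lambda|_\nu\}=\max\{1,|x|_\nu\}^\lambda$ et le facteur $\lambda$ sort de la somme définissant la hauteur. Pour $\lambda<0$, il suffit, par application du cas précédent à $|\lambda|$, de montrer que $\h(x^{-1})=\h(x)$ (on peut supposer $x\neq 0$). On écrit alors $\log\max\{1,t^{-1}\}=\log\max\{1,t\}-\log t$ pour $t=|x|_\nu>0$; après sommation pondérée par les $d_\nu$, la contribution supplémentaire $-\sum_\nu d_\nu\log|x|_\nu$ s'annule en vertu de la formule du produit rappelée plus haut, ce qui donne $\h(x^{-1})=\h(x)$.

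Pour l'invariance galoisienne, on suppose $K/\Q$ galoisienne et contenant $x$; alors $\sigma\in\Gal(\overline{\Q}/\Q)$ se restreint en un automorphisme de $K$. L'idée centrale est que $\sigma$ induit une permutation $\nu\mapsto\sigma\cdot\nu$ des places de $K$, caractérisée par $|y|_{\sigma\cdot\nu}:=|\sigma^{-1}(y)|_\nu$ pour tout $y\in K$, de sorte que $|\sigma(x)|_{\sigma\cdot\nu}=|x|_\nu$. De plus, cette permutation préserve les degrés locaux, puisque $\sigma$ s'étend en un isomorphisme topologique entre les complétés $K_\nu$ et $K_{\sigma\cdot\nu}$ respectant le sous-corps $\Q_p$ ou $\R$. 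Le changement d'indice $\nu\leftrightarrow\sigma\cdot\nu$ dans la somme définissant $\h(\sigma(x))$ la ramène exactement à celle définissant $\h(x)$.

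Il n'y a pas d'obstacle technique sérieux: le seul point réellement non-trivial est l'intervention (discrète mais indispensable) de la formule du produit pour traiter le cas $\lambda<0$, sans laquelle l'identité $\h(x^{-1})=\h(x)$ ne tiendrait pas. La seconde assertion se ramène quant à elle à un simple argument de réindexation, une fois vérifié que l'action de $\Gal(\overline{\Q}/\Q)$ sur les places permute bien celles-ci en préservant les degrés locaux.
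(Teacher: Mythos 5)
Le papier ne démontre pas cette proposition: il se contente de renvoyer à \cite[Proposition 1.5.17, Lemma 1.5.18]{BombieriandGublerHeightsinDiophantineGeometry}. Votre preuve est correcte et suit exactement la méthode standard de cette référence — multiplicativité des valeurs absolues pour $\lambda\geq 0$, formule du produit pour ramener $\h(x^{-1})$ à $\h(x)$, et réindexation de la somme par l'action de $\sigma$ sur les places (qui préserve bien les degrés locaux $d_\nu$) pour l'invariance galoisienne.
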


La hauteur est aussi utilisée pour prouver la finitude d'un ensemble de points algébriques via le théorème fondamental ci-dessous :
  
  \begin{thm}[Northcott, §1.6, \cite{BombieriandGublerHeightsinDiophantineGeometry}] \label{Northcott}
  Fixons $B,D>0$. Alors l'ensemble $$\{\alpha\in\overline{\Q}^*\vert\; \h(\alpha)\leq B~\text{et}~[\Q(\alpha):\Q]\leq D\}$$ est fini. 
  \end{thm}

Soit $\alpha\in\overline{\Q}^*$ tel que $\h(\alpha)=0$. Alors, pour tout entier $k\geq 1$, 
\begin{equation*}
\h(\alpha^k)=k\;\h(\alpha)=0.
\end{equation*}
 Par le théorème \ref{Northcott}, la suite $(\alpha^k)_{k\geq 1}$ ne possède alors qu'un nombre fini de termes distincts. 
 Ainsi, $\alpha\in \mu_\infty$. La réciproque étant triviale, on en déduit que : 
	
	\begin{prop}
	Soit $\alpha\in\overline{\Q}^*$. Alors, $\h(\alpha)=0$ si et seulement si $\alpha\in\mu_\infty$.
	\end{prop}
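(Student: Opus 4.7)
Mon plan est d'appliquer le théorème de Northcott (théorème \ref{Northcott}) à la suite des puissances entières positives de $\alpha$. C'est essentiellement l'argument esquissé juste avant l'énoncé, et l'unique stratégie naturelle à ce stade: on ne dispose en effet à ce point de l'article que de la multiplicativité $\h(\alpha^\lambda)=|\lambda|\,\h(\alpha)$ et du théorème de finitude de Northcott, et l'on veut déduire de la nullité de la hauteur un comportement multiplicatif fini.

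Plus précisément, pour le sens direct, je suppose $\h(\alpha)=0$. Par la proposition de multiplicativité,
\begin{equation*}
\h(\alpha^k)=k\,\h(\alpha)=0\quad\text{pour tout entier}\ k\geq 1.
\end{equation*}
Toutes les puissances $\alpha^k$ vivent dans le corps de nombres $\Q(\alpha)$, donc $[\Q(\alpha^k):\Q]\leq D:=[\Q(\alpha):\Q]$ pour tout $k$. Le théorème \ref{Northcott}, appliqué avec ce $D$ et n'importe quelle borne $B>0$, garantit que l'ensemble $\{\alpha^k:k\geq 1\}$ est fini.

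Par le principe des tiroirs, il existe donc deux entiers $1\leq j<k$ tels que $\alpha^j=\alpha^k$. Comme $\alpha\in\overline{\Q}^*$, on peut simplifier par $\alpha^j$ et obtenir $\alpha^{k-j}=1$, ce qui prouve $\alpha\in\mu_\infty$. Réciproquement, si $\alpha\in\mu_\infty$, soit $n\geq 1$ tel que $\alpha^n=1$; la multiplicativité donne $0=\h(1)=\h(\alpha^n)=n\,\h(\alpha)$, d'où $\h(\alpha)=0$.

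Il n'y a pas véritablement d'obstacle: tout repose sur la combinaison directe de deux résultats déjà établis dans cette section, la multiplicativité de $\h$ et le théorème de Northcott. La seule précaution est de bien utiliser l'hypothèse $\alpha\neq 0$ pour passer de $\alpha^j=\alpha^k$ à $\alpha^{k-j}=1$, ce qui est immédiat puisque $\alpha\in\overline{\Q}^*$ par hypothèse.
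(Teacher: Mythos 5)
Votre démonstration est correcte et suit exactement l'argument que l'article esquisse juste avant l'énoncé : multiplicativité de la hauteur, théorème de Northcott pour la finitude de $\{\alpha^k : k\geq 1\}$, puis principe des tiroirs pour conclure que $\alpha$ est une racine de l'unité. Vous explicitez simplement un peu plus le passage de $\alpha^j=\alpha^k$ à $\alpha^{k-j}=1$ ainsi que la réciproque, que l'article qualifie de triviale.
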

       
      \subsection{Corps de rayons.}
      
     Dans toute cette sous-section, $K$ désigne un corps de nombres. Nous allons rappeler quelques propriétés utiles sur les corps de classes de rayons. 
      
     \begin{defn}
      Un $K$- module est un produit formel $\mathfrak{m}=\mathfrak{m}_f\infty$ où $\infty$ désigne le produit formel de tous les plongements réels de $K$ et $\mathfrak{m}_f$ (appelée la partie finie) est un idéal non-nul de $\mathcal{O}_K$.
      \end{defn}
      
\`A partir de maintenant, $\mathfrak{m}=\mathfrak{m}_f\infty$ désigne un $K$-module et on confondra souvent $\mathfrak{m}$ et $\mathfrak{m}_f$. De ce module, on peut construire deux groupes importants. Le premier, est le groupe abélien $J_\mathfrak{m}(K)$ composé de tous les idéaux fractionnaires de $K$ premiers à $\mathfrak{m}_f$ et le second, noté $P_\mathfrak{m}(K)$, est le sous-groupe des idéaux principaux de $J_\mathfrak{m}(K)$ engendrés par $a/b$ où 
      
\begin{enumerate} [i)]
\item $a,b\in \mathcal{O}_K\backslash\{0\}$ et $\pgcd((a),(b))=1$, 
\item $a\equiv b \mod\mathfrak{m}$, 
\item $\sigma\left(\frac{a}{b}\right)>0$ pour tout plongement réel $\sigma$ de $K$.
\end{enumerate}
         
Un groupe $H$ est appelé \textit{groupe idéal de module $\mathfrak{m}$} si  $P_\mathfrak{m}(K)\subset H \subset J_\mathfrak{m}(K)$. Notons $Cl_\mathfrak{m}(K,H)$ le groupe quotient $J_\mathfrak{m}(K)/H$. Alors: 

\begin{thm} (\cite[chapter 6, Proposition 1.8]{NeukirchAlgebraicnumbertheory}) \label{généralisation groupe de classe}
Pour tout groupe idéal $H$ de module $\mathfrak{m}$, $Cl_\mathfrak{m}(K,H)$ est fini.
\end{thm}     

En particulier, le théorème \ref{généralisation groupe de classe} appliqué à $\mathfrak{m}_f=(1)$ et à $H=P_{(1)}(K)$  montre que le groupe des classes classique est fini.

Soient $K$ un corps de nombres, $L/K$ une extension finie et $\mathfrak{m}$ un $K$-module.
Notons 
\begin{equation*}
N_\mathfrak{m}(L/K)=\{\mathfrak{a}\subset K \vert \mathfrak{a}=N_{L/K}(\mathcal{A}),\; \mathcal{A} \; \text{un idéal fractionnaire de}\; L, \; \pgcd(\mathfrak{a},\mathfrak{m}_f)=1\}
\end{equation*}
et
\begin{equation*}
H_\mathfrak{m}(L/K)=P_\mathfrak{m}(K) N_m(L/K).
\end{equation*}
Si $L/K$ est galoisienne alors, par un théorème de Weber (cf \cite{MR1037805}),
\begin{equation} \label{inégalité sur les corps de rayons}
[J_\mathfrak{m}(K) : H_\mathfrak{m}(L/K)] \leq [L : K].
\end{equation}
On dit que $L$ est un \textit{corps de classes} (au sens de Takagi) sur $K$ s'il existe un $K$-module $\mathfrak{m}$ tel que \eqref{inégalité sur les corps de rayons} soit une égalité.
Un tel $K$-module est appelé un \textit{module admissible} pour $L/K$.
On dit également qu'un groupe idéal $H$ (associé au $K$-module $\mathfrak{m}$) admet un corps de classes $L$ si $H=H_\mathfrak{m}(L/K)$ et si \eqref{inégalité sur les corps de rayons} est une égalité.

\begin{thm}[Takagi] \label{Takagi}
Soit $H$ un groupe idéal d'un $K$-module $\mathfrak{m}$. Alors:
\begin{enumerate} [i)]
\item (Existence) $H$ admet un corps de classes $L$. \
\item (Isomorphisme) si $H$ admet un corps de classes $L$, alors $\Gal(L/K)\simeq Cl_\mathfrak{m}(K,H)$.\
\item (Complétude) toute extension abélienne de $K$ est un corps de classes sur $K$.\
\item (Comparaison) si $H_1$ (resp. $H_2$) est un groupe idéal associé au $K$-module $\mathfrak{m}$ et admet un corps de classes $L_1$ (resp. $L_2$), alors 
\begin{equation*}
L_1\subset L_2\Leftrightarrow H_2\subset H_1. \
\end{equation*}
\item (Décomposition) si $H$ admet un corps de classes $L$, alors tout premier $\mathfrak{p}$ de $\mathcal{O}_K$ ne divisant pas $\mathfrak{m}$ est non ramifié dans $L$ et son degré résiduel $f_\mathfrak{p}(L\vert K)$ est égal à l'ordre de $\mathfrak{p}$ dans $J_\mathfrak{m}(K)/H$.

\end{enumerate}
\end{thm}

Soient $\mathfrak{m}$ un $K$- module et $H$ un groupe idéal de module $\mathfrak{m}$.
Le $i)$ et le $iv)$ du théorème \ref{Takagi} prouvent respectivement l'existence et l'unicité du corps de classes pour $H$.
Si $H=P_\mathfrak{m}(K)$, le corps de classes pour $H$ est appelé corps de rayon et est noté $K_\mathfrak{m}$. 

Soit $L/K$ une extension abélienne finie. 
Alors, par le $iii)$ du théorème \ref{Takagi}, on peut trouver un $K$-module $\mathfrak{m_1}$ et un groupe idéal $H_1$ de module $\mathfrak{m_1}$ tels que $L$ est un corps de classes pour $H_1$. 
Or, $P_\mathfrak{m_1}(K)\subset H_1$ et donc $L\subset K_{\mathfrak{m_1}}$ d'après le $iv)$ du théorème \ref{Takagi}. 
 Ainsi, les corps de rayons jouent le rôle des extensions cyclotomiques dans le sens où toute extension abélienne finie de $K$ est contenue dans un corps de rayon de $K$.

Le cas $\mathfrak{m}=(1)$ fournit des résultats déjà connus sur les corps de Hilbert. 
Dans ce cas, le corps de rayon est le corps de Hilbert du corps $K$.

\subsection{Majoration du degré d'inertie.} \label{majoration du degré d'inertie}
 
 On s'intéresse maintenant à la majoration du degré d'inertie dans un compositum de corps.  
 Jusqu'à la fin de cette sous-section, fixons un premier rationnel $p$ et un corps local $F/\Q_p$.
Rappelons d'abord un résultat de Krasner qui donne le nombre d'extensions et d'extensions totalement ramifiées de $F$ de degré fixé.

\begin{thm}[Krasner, Théorème 2, \cite{KrasnerNombredesextensionsdundegredonneduncorpspadique}] \label{Krasner}
Notons $\mathcal{N}_{F,d}$ le nombre d'extensions $K/F$ de degré $d=hp^m$ avec $(h,p)=1$ et $D=d[F:\Q_p]$. Alors
\begin{equation*}
\mathcal{N}_{F,d}=\left(\sum_{l\mid h} l\right)\left(\sum_{s=0}^m \frac{p^{m+s+1}-p^{2s}}{p-1}(p^{\epsilon(s)D}-p^{\epsilon(s-1)D})\right),
\end{equation*}
où $\epsilon(s)=p^{-1}+p^{-2}+...+p^{-s}$ si $s>0$, $\epsilon(0)=0$ et $p^{\epsilon(s-1)D}=0$ si $s=0$.

De plus, si $\mathcal{N}_{F,d}^{(r)}$ désigne le nombre d'extensions totalement ramifiées $K/F$ de degré $d$, alors
\begin{equation*}
\mathcal{N}_{F,d}^{(r)}=d\sum_{s=0}^m p^s(p^{\epsilon(s)D}-p^{\epsilon(s-1)D}).
\end{equation*}
\end{thm} 

   Le théorème de Krasner donne en particulier une majoration de l'indice de ramification et du degré d'inertie du compositum d'une famille d'extensions de $F$.

\begin{cor} \label{corollaire immédiat du thm de Krasner}
Soit $(K_i/F)_{i\in\N}$ une famille d'extensions finies telle que la suite $([K_i : F])_i$ est majorée. 
Notons $\Gamma=\{[K_i : F], i\in\N\}$ et $K$ le compositum des $K_i$.
Alors
\begin{equation*}
e(K\vert F),f(K\vert F)\leq e(K\vert F) f(K \vert F)=[K : F] \leq \prod_{d\in\Gamma} d^{\mathcal{N}_{F,d}}.
\end{equation*}
\end{cor}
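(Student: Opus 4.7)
The plan is to observe that the boundedness of $([K_i:F])_i$, combined with Krasner's count, forces the family $(K_i)_i$ to contain only finitely many distinct fields, so that the compositum is itself a finite extension whose degree we can crudely bound.

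First, I would note that since the sequence $([K_i:F])_i$ is bounded and takes values in $\N$, the set $\Gamma$ is finite. For each $d \in \Gamma$, the theorem of Krasner (Theorem \ref{Krasner}) tells us that, inside a fixed algebraic closure of $F$, there exist exactly $\mathcal{N}_{F,d}$ extensions of $F$ of degree $d$. Hence the set $\{K_i : i\in \N\}$ contains at most $\sum_{d\in \Gamma} \mathcal{N}_{F,d}$ distinct fields, and in particular is finite. Let us enumerate the distinct ones as $L_1, \dots, L_N$ with $[L_j : F] = d_j \in \Gamma$.

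Next, I would use the standard fact that for any two finite extensions $M_1, M_2$ of a base field, the compositum satisfies $[M_1 M_2 : F] \leq [M_1 : F] \, [M_2 : F]$; iterating this inequality over the finite list $L_1, \dots, L_N$ gives
\begin{equation*}
[K : F] = [L_1 \cdots L_N : F] \leq \prod_{j=1}^N d_j \leq \prod_{d\in\Gamma} d^{\mathcal{N}_{F,d}},
\end{equation*}
the last step because, for each $d\in \Gamma$, at most $\mathcal{N}_{F,d}$ of the $L_j$ have degree $d$ over $F$. In particular $K/F$ is a finite extension of local fields, so the standard identity $[K:F] = e(K|F)\, f(K|F)$ holds, and the individual factors $e(K|F)$ and $f(K|F)$ are each bounded by their product.

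There is no real obstacle here: the argument is a direct assembly of Krasner's counting result and the elementary degree inequality for compositums. The only mild care needed is that Krasner's count applies to extensions sitting inside a fixed algebraic closure of $F$, which is exactly the setting in which the compositum $K$ is formed, so identifying isomorphic $K_i$ is legitimate.
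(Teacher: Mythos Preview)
Your argument is correct and is exactly the reasoning the paper has in mind: the corollary is stated as ``immédiat'' with no separate proof, and the paper later makes the same deduction explicit when it writes ``Par le théorème \ref{Krasner}, la famille $(K_i/F)_{i\in\N}$ qui intervient dans le corollaire \ref{corollaire immédiat du thm de Krasner} est une famille finie''. There is nothing to add.
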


Nous nous proposons de donner une majoration plus fine qui sera prouvée dans l'appendice.
Par le théorème \ref{Krasner}, la famille $(K_i/F)_{i\in\N}$ qui intervient dans le corollaire \ref{corollaire immédiat du thm de Krasner} est une famille finie; on la note maintenant $\{K_1/F, \dots, K_n/F\}$.
Quitte à permuter les corps $K_1,\dots,K_n$, on suppose que $K_1/F,..., K_m/F$ sont non sauvagement ramifiés et que $K_{m+1}/F,...,K_n/F$ sont sauvagement ramifiés.
Notons $e_i=e(K_i\vert F)$ et $f_i=f(K_i\vert F)$.
Pour tout $r\in\{1,\dots,n\}$, posons
\begin{equation} \label{définition de Lambda r}
\Lambda_r =\{e_1,\dots,e_r\}. 
\end{equation}
Soit $e\in\Lambda_n$. Notons $\mathcal{N}(e)$ le nombre d'extensions $K_i/F$ d'indice de ramification  $e$.
Enfin, pour tout premier $q$, on note 
\begin{equation} \label{définition de a(q) r} 
a_r(q):=\left(\sum\limits_{e\in\Lambda_r} v_q(e)\right)-\underset{e\in\Lambda_r}{\max}(v_q(e)).
\end{equation}

\begin{thm} \label{inertiecompositum}
Si $m=n$, alors $e(K_1\dots K_n\vert F)=\ppcm(e_1,\dots,e_n)$. 
Si $m<n$, alors 
\begin{equation*}
e(K_1\dots K_n\vert F)\leq \ppcm(e_1,\dots,e_{m+1})  e_{m+1}^{\mathcal{N}(e_{m+1})-1}\prod_{e\in\Lambda_n\backslash\Lambda_{m+1}} e^{\mathcal{N}(e)}.
\end{equation*}
On a également
\begin{equation*}
f(K_1...K_n\vert F)\leq\ppcm(f_1,...,f_n) E
\end{equation*}
où
\begin{equation} \label{expression de E sans sauvage}
E=\prod\limits_{e\in\Lambda_n} e^{\mathcal{N}(e)-1}\prod\limits_{q\in\mathcal{P}} q^{a_n(q)}
\end{equation}
si $m\geq n-2$ et
\begin{equation} \label{expression de E avec sauvage}
E = \prod\limits_{e\in\Lambda_{m+2}}e^{-1}\prod\limits_{q\in\mathcal{P}} q^{a_{m+2}(q)}\prod\limits_{e\in\Lambda_n} e^{\mathcal{N}(e)} 
\end{equation}
si $m< n-2$. 
\end{thm}
La valeur de $e(K_1\dots K_n\vert F)$ dans le cas où $m=n$ dans le théorème ci-dessus découle immédiatement du lemme d'Abhyankar ci-dessous : 

\begin{thm}[Lemme d'Abhyankar, Theorem 3, \cite{CornellOntheconstructionofrelativegenusfields}] \label{Abhyankar}
Soit $F/\Q_p$ une extension finie.
Soient $L_1/F$ et $L_2/F$ deux extensions finies telles que $p\nmid e(L_1\vert F)$ ou $p\nmid e(L_2\vert F)$. Alors 
\begin{equation*}
e(L_1L_2\vert F)=\ppcm(e(L_1\vert F), e(L_2\vert F)).
\end{equation*}
\end{thm}

 Il n'est pas toujours aisé de savoir si deux extensions de $F$ sont égales.
  Le nombre $\mathcal{N}(e)$ n'est donc pas toujours facile à calculer. 
  Si l'on veut une majoration plus explicite du degré résiduel $f(K_1\dots K_n\vert F)$, on peut majorer $\mathcal{N}(e)$ comme suit: soient $e\in\Lambda_n$ et $f\in\N^*$ et cherchons le nombre d'extensions $L/F$ telles que
\[
\begin{cases}
       e(L\vert F)=e  \\
        f(L\vert F)=f
    \end{cases}.
\]
Notons $F\{f\}$ l'unique extension non ramifiée de $F$ de degré $f$ (cf \cite[Chapter III, Theorem 25]{FMJT}).
Alors, le nombre d'extensions de $F$ de degré $ef$ et d'indice de ramification $e$ est égal au nombre d'extensions totalement ramifiées de $F\{f\}$ de degré $e$, i.e. à $\mathcal{N}_{F\{f\},e}^{(r)}$ qui se calcule à l'aide du théorème \ref{Krasner}. 
On en déduit donc que 
\begin{equation} \label{majoration du nombre N(e).}
\mathcal{N}(e)\leq\sum_{f\in\Lambda(e)}\mathcal{N}_{F\{f\},e}^{(r)},
\end{equation}
où $\Lambda(e)=\{f\in\N^*\vert\;\exists i\in\{1,\dots,n\},\;e=e_i \; \text{et} \; f=f_i\}$. \\

\begin{rqu}
Pour d'autres résultats concernant l'indice de ramification dans un compositum d'extensions sauvages bien particulières, le lecteur intéressé pourra voir \cite{MR3496675} ou \cite{MR2309533}.
\end{rqu}

\section{Preuve du théorème \ref{version quantitative du thm principal.} et exemples.} \label{section preuve du theoreme fondamental}

Dans cette section, nous allons démontrer le théorème \ref{version quantitative du thm principal.} et on en donnera quelques applications.
La démonstration repose sur une méthode désormais classique et qui sera utilisée dans le lemme ci-dessous.

\subsection{Preuve du théorème \ref{version quantitative du thm principal.} et minoration explicite de la hauteur.} \label{une minoration explicite}

Soient $K$ un corps de nombres, $L/K$ une extension algébrique et $\mathfrak{p}$ un idéal premier de $\mathcal{O}_K$ au-dessus d'un premier rationnel $p$. 

\begin{defn} \label{définitions de k }
Pour $e\in\N^*$, on définit les réels $\lambda(e,\mathfrak{p})$ et $\beta(e,\mathfrak{p})$ de la façon suivante.
Soit $k$ l'unique entier tel que \[p^{k-1}(p-1)\leq e< p^k(p-1),\] et notons, pour $\lambda\in\N$, $\beta_{\lambda}(e,p)=p^{\min\{\lambda,k\}}/e+\max\{0, \lambda-k\}$. 
Alors on note $\lambda(e,\mathfrak{p})$ le plus petit entier positif tel que 
\begin{equation*}
\beta_{\lambda(e,\mathfrak{p})}(e,p)\; [K_\mathfrak{p} : \Q_p] \log p> [K : \Q]\log 2, 
\end{equation*}
et on note $\beta(e,\mathfrak{p}):=\beta_{\lambda(e,\mathfrak{p})}(e,p)$.
\end{defn}

\begin{lmm} \label{lemme classique}
Soit $\nu\mid\mathfrak{p}$ une place finie de $\mathcal{O}_L$.
 Supposons que les familles de réels $(f_\nu(L\vert \Q))_{\nu\vert \mathfrak{p}}$ et $(e_\nu(L\vert \Q))_{\nu\vert \mathfrak{p}}$ soient majorées respectivement par $f$ et $e$.
Alors $L$ a la propriété $(B)$. 
De plus, pour tout $x\in L\backslash\mu_\infty$, 
\begin{equation*}
\h(x)\geq \frac{1}{p^{f+\lambda}+p^\lambda}\left(\frac{\beta[K_\mathfrak{p} : \Q_p]}{[K : \Q]}\log p-\log 2\right)>0
\end{equation*}
où $\lambda=\lambda(e,\mathfrak{p})$ et $\beta=\beta(e,\mathfrak{p})$.
\end{lmm}

\begin{proof}

Soit $M\subset L$ un corps de nombres contenant $K$.
Soit $w$ une place de $\mathcal{O}_M$ au-dessus de $\mathfrak{p}$. 
Notons $f_w=f_w(M\vert \Q)$ et $e_w=e_w(M\vert \Q)$.
Comme $\mathcal{O}_M/w\mathcal{O}_M\simeq\mathcal{O}_{M_w}/w\mathcal{O}_{M_w}$ et $e_w\leq e$, on déduit du petit théorème de Fermat que 
\begin{equation*}
\vert x^{p^{f_w}}-x\vert_w\leq p^{-1/e}
\end{equation*}
 pour tout $x\in\mathcal{O}_{M_w}$.
D'après \cite[Lemma 2.1]{amorosodavidzannieronfieldswiththepropertyB} \footnote{Une imprécision s'est glissée dans ce lemme. La dernière formule de l'énoncé doit s'écrire  \[s_{p,\rho}(\lambda)=p^{\min\{\lambda,k\}} \rho+ \max\{0,\lambda-k\}\] } avec $\rho=1/e$, on a
\begin{equation*}
\vert x^{p^{f_w+\lambda}}-x^{p^{\lambda}}\vert_w\leq p^{-\beta}.
\end{equation*}
Pour avoir une majoration sur tout $M$, on utilise un argument d'Habegger (cf \cite[Lemma 4.2]{HabeggerSmallheightandinfinitenonabelianextensions}). 
Soit $x\in M_w$ tel que $x\in\mathcal{O}_{M_w}$. Alors
\begin{equation*}
\vert x^{p^{f_w+\lambda}}-x^{p^\lambda}\vert_w\leq p^{-\beta}.
\end{equation*}

Soit $x\in M_w$ tel que $x\notin\mathcal{O}_{M_w}$. Alors $x^{-1}\in\mathcal{O}_{M_w}$ car $\mathcal{O}_{M_w}$ est un anneau de valuation. 
Ainsi, $\vert x^{-p^{f_w+\lambda}}-x^{-p^\lambda}\vert_w\leq p^{-\beta}$ ou encore 
\begin{equation*}
\vert x^{p^{f_w+\lambda}}-x^{p^\lambda}\vert_w\leq p^{-\beta}\vert x\vert_w^{p^{f_w+\lambda}+p^\lambda}.
\end{equation*}
Il s'ensuit donc que pour tout $x\in M_w$, et par conséquent pour tout $x\in M$, que 
\begin{equation} \label{majoration pas bonne}
\begin{aligned}
\vert x^{p^{f_w+\lambda}}-x^{p^\lambda}\vert_w & \leq p^{-\beta}\max\{1,\vert x\vert_w\}^{p^{f_w+\lambda}+p^\lambda} \\
& \leq p^{-\beta}\max\{1,\vert x\vert_w\}^{p^{f+\lambda}+p^\lambda}
\end{aligned}
\end{equation}
car $f_w\leq f$. 

Soit $x\in M^*\backslash\mu_\infty$. 
Alors $x^{p^{f_w+\lambda}}-x^{p^\lambda}\neq 0$ et, par la formule du produit, on obtient 
\begin{align*}
0 & =  \sum\limits_{w\in \mathcal{M}(M)}\frac{d_w}{[M:\Q]}\log\vert x^{p^{f_w+\lambda}}-x^{p^\lambda}\vert_w \\ 
 & =  \sum\limits_{w\vert \mathfrak{p}}\frac{d_w}{[M:\Q]}\log\vert x^{p^{f_w+\lambda}}-x^{p^\lambda}\vert_w+\sum\limits_{w\nmid \mathfrak{p}}\frac{d_w}{[M:\Q]}\log\vert x^{p^{f_w+\lambda}}-x^{p^\lambda}\vert_w.
 \end{align*}
En utilisant la majoration \eqref{majoration pas bonne} et la formule (cf \cite[Corollaire  1.3.2]{BombieriandGublerHeightsinDiophantineGeometry})  
\begin{equation*}
\sum_{w\vert\mathfrak{p}} d_w=[M: K][K_\mathfrak{p} : \Q_p]
\end{equation*}
 sur les places divisant $\mathfrak{p}$, on obtient que
\begin{equation} \label{fddsdgvfgfgfdfdds}
\begin{aligned}
0\leq -\beta\frac{[K_\mathfrak{p} : \Q_p]}{[K : \Q]}\log p & +(p^{f+\lambda}+p^\lambda)\sum_{w\mid \mathfrak{p}} \frac{d_w}{[M : \Q]}\log\max\{1, \vert x\vert_w\}\\
& +\sum\limits_{w\nmid \mathfrak{p}}\frac{d_w}{[M:\Q]}\log\vert x^{p^{f_w+\lambda}}-x^{p^\lambda}\vert_w.
\end{aligned}
\end{equation}
En utilisant les inégalités ultramétriques et triangulaires ainsi que la formule
\begin{equation*}
\forall ~ a,b\in\R^{+*}, ~ \log(a+b)\leq \log\max\{1,a\}+\log\max\{1,b\}+\log 2,
\end{equation*}
il en résulte que 
\begin{multline*}
\sum_{w\nmid \mathfrak{p}}\frac{d_w}{[M:\Q]}\log\vert x^{p^{f_w+\lambda}}-x^{p^\lambda}\vert_w \leq \sum\limits_{\underset{w\in\mathcal{M}^0(M)}{w\nmid \mathfrak{p},}}\frac{d_w}{[M:\Q]}\log\max\{\vert x^{p^{f_w+\lambda}}\vert_w, \vert x^{p^\lambda}\vert_w\} \\
+ \sum\limits_{\sigma\in\mathcal{M}^\infty(M)}\frac{d_\sigma}{[M:\Q]}\big(\log\max\{1,\vert x^{p^{f_w+\lambda}}\vert_w\}+\log\max\{1, \vert x^{p^\lambda}\vert_w\} +\log 2\big).
\end{multline*}
En injectant cette inégalité dans \eqref{fddsdgvfgfgfdfdds} et en utilisant le fait que $f_w\leq f$, on en déduit que
\begin{equation*}
0\leq (p^{f+\lambda}+p^\lambda)\h(x)-\beta\frac{[K_\mathfrak{p} : \Q_p]}{[K : \Q]}\log p+\log 2
\end{equation*}
 et donc que 
\begin{equation*}
\h(x)\geq \frac{1}{p^{f+\lambda}+p^\lambda}\left(\frac{\beta[K_\mathfrak{p} : \Q_p]}{[K : \Q]}\log p-\log 2\right)>0
\end{equation*}
par définition de $\beta$.

\end{proof}

Nous sommes maintenant en mesure de prouver notre théorème : \\

\textit{Démonstration du théorème \ref{version quantitative du thm principal.} :}

Fixons $K$ un corps de nombres et $(K_n/K)_n$ une suite d'extensions finies telle que la suite $(d_n)_n$ avec $d_n=[K_n : K]$ est majorée. 
Fixons également un idéal premier $\mathfrak{p}$ de $\mathcal{O}_K$ au-dessus d'un premier rationnel $p$.
Fixons $M\geq 1$ un entier.
Pour $n\in\N$, notons 
\begin{equation*}
\Omega_n=\{\text{idéaux}\; \mathfrak{m}\subset \mathcal{O}_{K_n} \vert \; \exists m\leq M, \; p\nmid m, \; \text{et} \;\mathfrak{m}\; \text{au-dessus de} \; m\}.
\end{equation*}
Pour $\mathfrak{m}\in\Omega_n$, notons $K_{n,\mathfrak{m}}$ le corps de rayon de $K_n$ associé à $\mathfrak{m}$.

On suppose que pour tout $n$, il existe un unique idéal premier $\mathfrak{p}_n$ de $\mathcal{O}_{K_n}$ au-dessus de $\mathfrak{p}$.
On se propose de montrer que le compositum $L$ de tous les $K_{n,\mathfrak{m}}$ a la propriété $(B)$.
Comme $K$ est un corps de nombres fixé, on déduit du lemme \ref{lemme classique} qu'il suffit de montrer que les familles de réels $(e_\nu(L\vert K))_{\nu\vert \mathfrak{p}}$ et $(f_\nu(L\vert K))_{\nu\vert \mathfrak{p}}$ sont majorées.

Pour cela, fixons $\nu$ une place de $L$ au-dessus de $\mathfrak{p}$. 
Pour $n\in\N$ et $\mathfrak{m}\in\Omega_n$, notons $\nu_{n,\mathfrak{m}}$ la place de $K_{n,\mathfrak{m}}$ au-dessous de $\nu$. 
 On va montrer que la famille des degrés locaux $[(K_{n,\mathfrak{m}})_{\nu_{n,\mathfrak{m}}} : K_\mathfrak{p}]$ est majorée (par un réel indépendant de $\nu, n$ et $\mathfrak{m}$). 
Car alors, les complétés $(K_{n,\mathfrak{m}})_{\nu_{n,\mathfrak{m}}}$ ($(n,\mathfrak{m})\in \N \times \Omega_n$) étant en nombre fini d'après le théorème \ref{Krasner}, on aura que $L_\nu$ est égal au compositum de tous les $(K_{n,\mathfrak{m}})_{\nu_{n,\mathfrak{m}}}$ et le corollaire \ref{corollaire immédiat du thm de Krasner} donnera bien l'existence d'une constante $C$ indépendante de $\nu,n,\mathfrak{m}$ telle que $e_\nu(L\vert K)\leq C$ et $f_\nu(L \vert K)\leq C$. 

Comme $[(K_{n,\mathfrak{m}})_{\nu_{n,\mathfrak{m}}} : K_\mathfrak{p}]=e_{\nu_{n,\mathfrak{m}}}(K_{n,\mathfrak{m}} \vert K)f_{\nu_{n,\mathfrak{m}}}(K_{n,\mathfrak{m}} \vert K)$, il suffit alors de majorer $e_{\nu_{n,\mathfrak{m}}}(K_{n,\mathfrak{m}} \vert K)$ et $f_{\nu_{n,\mathfrak{m}}}(K_{n,\mathfrak{m}} \vert K)$ par un réel indépendant de $\nu, n, \mathfrak{m}$. \\

 Commençons par $e_{\nu_{n,\mathfrak{m}}}(K_{n,\mathfrak{m}}\vert K_n)$. 
 Par hypothèse, $\mathfrak{m}$ et $p$ sont premiers entre eux. 
 Comme $\mathfrak{p}_n\mid p$, on en déduit que $\mathfrak{p}_n\nmid\mathfrak{m}$.
  Ainsi, d'après le $v)$ du théorème \ref{Takagi},  
\begin{equation} \label{propriété des corps de rayons sur la ramification.} 
 e_{\nu_{n,\mathfrak{m}}}(K_{n,\mathfrak{m}} \vert K_n)=1.
 \end{equation}

Intéressons nous maintenant au degré d'inertie $f_{\nu_{n,\mathfrak{m}}}(K_{n,\mathfrak{m}}\vert K_n)$.  
D'après le $v)$ du théorème \ref{Takagi}, $f_{\nu_{n,\mathfrak{m}}}(K_{n,\mathfrak{m}} \vert K_n)$ est le plus petit entier $h$ tel que $\mathfrak{p}_n^h\in P_\mathfrak{m}(K_n)$, i.e. tel que

\begin{enumerate} [i)]
\item  $\mathfrak{p}_n^h$ est principal. Notons $\gamma\in\mathcal{O}_{K_n}$ un générateur de $\mathfrak{p}_n^h$.
\item $\gamma\equiv 1 \; \mod \mathfrak{m}$,
\item pour tout plongement réel $\sigma : K_n\hookrightarrow\R$, on a $\sigma(\gamma)>0$.
\end{enumerate}
  Notons   
\begin{equation} \label{définition de N}  
  N= \ppcm(j \; \vert \; j=1,\dots,M \; , \pgcd(j,p)=1).
  \end{equation}
Notons également $f_\mathfrak{p}(K)$ l'ordre de $\mathfrak{p}$ dans $Cl(K)$ (le groupe des classes de $K$) et $(\alpha)$ l'idéal $\mathfrak{p}^{f_\mathfrak{p}(K)}$. 

Si $N\neq 1$, alors comme $\mathfrak{p}$ est au-dessus de $p$ et que $p\nmid N$, on a 
\begin{equation*}
\alpha\in \left(\mathcal{O}_K/N\mathcal{O}_K\right)^{\times}.
\end{equation*}
Ainsi, pour tout $N$, il existe un plus petit entier strictement positif $g$ tel que $\alpha^g \equiv 1 \mod N\mathcal{O}_K$ (on a $g=1$ si $N=1$). 

 Par hypothèse, il existe un unique idéal $\mathfrak{p}_n$ de $\mathcal{O}_{K_n}$ au-dessus de $\mathfrak{p}$.
Par conséquent, $\mathfrak{p}\mathcal{O}_{K_n}=\mathfrak{p}_n^{e_{\mathfrak{p}_n}(K_n\vert K)}$.
Ainsi, $\mathfrak{p}_n^{h_n}$ avec  
\begin{equation} \label{expression de h n}
h_n:=2 f_\mathfrak{p}(K)e_{\mathfrak{p}_n}(K_n\vert K)g 
\end{equation}
  vérifie les conditions $i)$, $ii)$ et $iii)$ ci-dessus.
  En effet, un rapide calcul montre que $\mathfrak{p}_n^{h_n}=(\alpha^{2g})$.
  Ainsi, $i)$ est vérifié. 
Comme $\alpha^g \equiv 1 \mod N\mathcal{O}_K$ et que $\mathfrak{m}\mid N\mathcal{O}_{K_n}$, il s'ensuit que $\alpha^{2g} \equiv 1 \mod \mathfrak{m}$, ce qui montre $ii)$.  
  Enfin, $iii)$ est vérifié car si $\sigma : K_n \hookrightarrow \R$, alors $\sigma(\alpha^{2g})=(\sigma(\alpha))^{2g}>0$.
  Ainsi, $\mathfrak{p}_n^{h_n}\in P_\mathfrak{m}(K_n)$.
  
 On a donc montré l'inégalité $f_{\nu_{n,\mathfrak{m}}}(K_{n,\mathfrak{m}} \vert K_n) \leq h_n$.  
 Par ailleurs, la suite $(d_n)_n$ est majorée par hypothèse. 
 Par conséquent, la suite $(e_{\mathfrak{p}_n}(K_n\vert K))_n$ est majorée. 
  De \eqref{expression de h n}, il devient alors clair que la suite $(h_n)_n$ est majorée. 
On a ainsi majoré $f_{\nu_{n,\mathfrak{m}}}(K_{n,\mathfrak{m}}\vert K_n)$ par une constante ne dépendant pas de $\nu$, $n$ et $\mathfrak{m}$.   
  Ceci prouve le théorème.
                   \qed

  Remarquons que si $M=1$, alors $K_{n,\mathfrak{m}}$ correspond au corps de Hilbert de $K_n$.
 Ainsi, le cas $M=1$ et $K=\Q$ de ce théorème correspond au théorème \ref{AmelioGal}.

 \begin{rqu} \label{remarque que j'ai dû rajouter}
En utilisant le théorème \ref{inertiecompositum} à la place du corollaire \ref{corollaire immédiat du thm de Krasner}, il est possible de donner une majoration explicite plus précise des familles $(e_\nu(L\vert \Q))_{\nu \mid \mathfrak{p}}$ et $(f_\nu(L\vert \Q))_{\nu \mid \mathfrak{p}}$. 
Puis, en utilisant le lemme \ref{lemme classique}, on en déduit une minoration explicite de la hauteur. 

On reprend les notations du théorème \ref{version quantitative du thm principal.}.
Pour $n\in\N$, posons $e_n=e_{\mathfrak{p}_n}(K_n\vert K)$.
Soit $\nu\in\mathcal{M}(L)$ au-dessus de $\mathfrak{p}$.
De \eqref{propriété des corps de rayons sur la ramification.}, on a $e_n=e_{\nu_{n,\mathfrak{m}}}(K_{n,\mathfrak{m}}\vert K)$ pour tout $\mathfrak{m}\in\Omega_n$. 
 
Notons $\Lambda$ l'ensemble des entiers $e_n$ divisibles par $p$. 
Si $\Lambda=\emptyset$, alors d'après le théorème \ref{inertiecompositum}, un majorant de $e_\nu(L\vert \Q)$ est 
\begin{equation} \label{cas de ramification non sauvage}
e:=e_\mathfrak{p}(K\vert\Q)\ppcm_{n\in\N}(e_n)
\end{equation}
(car nous n'avons pas d'extensions sauvagement ramifiées et donc $m=n$).
Remarquons que dans ce cas, $e$ ne dépend pas de $\nu$. 

Supposons maintenant que $\Lambda\neq \emptyset$. 
Soit $\tilde{e}\in \Lambda$. 
Alors d'après le théorème \ref{inertiecompositum}, un majorant de $e_\nu(L\vert K)$ est
\begin{equation} \label{calcul de e cas de ramification sauvage.}
e:=e_\mathfrak{p}(K\vert\Q)\underset{m\in\N,\; p\nmid e_m}{\ppcm}(e_m,\tilde{e})\;\tilde{e}^{\mathcal{N}(\tilde{e})-1}\prod\limits_{\underset{e'\neq \tilde{e}}{e'\in\Lambda}} {e'}^{\mathcal{N}(e')}
\end{equation} 
où $\mathcal{N}(e')$ désigne le nombre d'extensions $\{(K_{n,\mathfrak{m}})_{\nu_{n,\mathfrak{m}}}/K_\mathfrak{p} \vert \; n\in\N,\mathfrak{m}\in\Omega_n\}$ dont l'indice de ramification est précisément $e'$. 
Comme $e_{\nu_{n,\mathfrak{m}}}(K_{n,\mathfrak{m}}\vert K_n)=1$ (cf \eqref{propriété des corps de rayons sur la ramification.}), cela implique que $\mathcal{N}(e')$ est aussi le nombre d'extensions $\{K_n/K, n\in\N\}$  dont l'indice de ramification est précisément $e'$. 
Ainsi, $\mathcal{N}(e')$, et donc $e$, ne dépend pas de $\nu$.

Calculons maintenant un majorant de la famille $(f_\nu(L\vert K))_{\nu\mid \mathfrak{p}}$.
Notons, comme dans la preuve du théorème \ref{version quantitative du thm principal.}, $\alpha$ un générateur de $\mathfrak{p}^{f_\mathfrak{p}(K)}$.
Notons $g_\mathfrak{m}$ le plus petit entier strictement positif tel que $\alpha^{g_\mathfrak{m}}\equiv 1 \mod \mathfrak{m}$. 
	Définissons également $\epsilon_\mathfrak{m}$ comme suit : $\varepsilon_\mathfrak{m}$ vaut $1$ si $\sigma(\alpha^{g_\mathfrak{m}})>0$ pour tout plongement réel $\sigma : K_n\hookrightarrow\R$ et $2$ sinon. 
Par un calcul similaire à celui effectué dans la preuve du théorème \ref{version quantitative du thm principal.}, on en déduit que 
 \begin{equation*} 
 f_{\nu_{n,\mathfrak{m}}}(K_{n,\mathfrak{m}}\vert K_n)\mid \varepsilon_\mathfrak{m}\;f_\mathfrak{p}(K)\;e_ng_\mathfrak{m}.
 \end{equation*}
  Comme $d_n=e_nf_{\mathfrak{p}_n}(K_n\vert K)$, le théorème \ref{inertiecompositum} montre qu'un majorant de $f_\nu(L\vert \Q)$ est
\begin{equation} \label{expression de f}
f:=f_\mathfrak{p}(K\vert \Q) f_\mathfrak{p}(K)\underset{n\in\N, \mathfrak{m}\in\Omega_n}{\ppcm}(\varepsilon_\mathfrak{m}\;g_\mathfrak{m} d_n)E
\end{equation}
pour un certain $E$ explicitement calculable (cf \eqref{expression de E sans sauvage} et \eqref{expression de E avec sauvage}) ne dépendant pas de $\nu$. 
 
En reprenant les notations de la définition \ref{définitions de k }, on déduit du lemme \ref{lemme classique} que pour tout $x\in L^*\backslash \mu_\infty$,
\begin{equation*}
\h(x)\geq \frac{1}{p^{f+\lambda}+p^\lambda}\left(\frac{\beta[K_\mathfrak{p} : \Q_p]}{[K : \Q]}\log p-\log 2\right)>0
\end{equation*}
où $\lambda=\lambda(e,\mathfrak{p})$ et $\beta=\beta(e,\mathfrak{p})$. 
\end{rqu}

Remarquons que la minoration de la hauteur ci-dessus reste strictement positive si, à la place de $e$, on prend un majorant quelconque de la famille $(e_\nu(L\vert \Q))_{\nu\vert \mathfrak{p}}$.

\subsection{Exemples.}

Dans cette sous-section, on donne quatre exemples de corps ayant la propriété (B) ainsi qu'une minoration explicite de la hauteur. 
Les deux premiers sont des généralisations d'exemples déjà illustrés dans \cite{GalateauSmallheightinfieldsgeneratedbysingularmoduli}. 

Dans ces quatre exemples, on se place dans le cas particulier où $M=1$ et $K=\Q$. 
Par conséquent, $f_\mathfrak{p}(K)=1$.
Il s'ensuit donc que $(\alpha)=\mathfrak{p}=p\Z$. 
On peut ainsi supposer que $\alpha=p$.
Comme $M=1$, il s'ensuit que $\Omega_n=\{\mathcal{O}_{K_n}\}$ et donc que $\mathfrak{m}=(1)$. 
Ainsi, $K_{n,\mathfrak{m}}=H(K_n)$. 
De plus, par définition de $g_\mathfrak{m}$ et de $\epsilon_\mathfrak{m}$, il s'ensuit que $g_\mathfrak{m}=\epsilon_\mathfrak{m}=1$. 
De \eqref{expression de f}, il en résulte, dans cette situation, que $f=\ppcm_{n\in\N}(d_n)E$ où $d_n=[K_n : \Q]$ et où $E$ est défini comme dans \eqref{expression de E sans sauvage} ou \eqref{expression de E avec sauvage}.
 
 Pour un corps de nombres $K$, notons $\Delta_K$ son discriminant.

\begin{ex} \label{ex Galateau}
\rm{ Fixons un premier $p>3$.
Pour un entier $D$ non nul et sans facteur carré, notons $K_D=\Q(\sqrt{D})$. 
 Notons également
\begin{equation*}
\mathcal{D}=\{D\in\Z\backslash\{0\} \; \vert \; p\mathcal{O}_{K_D} \; \text{est premier ou totalement ramifié} \}
\end{equation*}
 et $L$ le compositum des $H(K_D)$ avec $D\in\mathcal{D}$.
 On souhaite minorer la hauteur des éléments de $L^*\backslash \mu_\infty$.
 
Le théorème \ref{Krasner} montre qu'il y a deux extensions quadratiques de $\Q_p$ totalement ramifiées (que l'on note $K_1$ et $K_2$) et une seule non ramifiée (que l'on note $K_3$). 
Il s'ensuit donc que $a_3(q)=0$ (cf \eqref{définition de a(q) r}) pour tout premier $q$.
Comme aucune extension n'est sauvagement ramifiée, on en déduit alors que $E=2$ (cf \eqref{expression de E sans sauvage}) et que $e=2$.
On a également $d_n=2$ pour tout $n$. 
Par conséquent, $f=4$.
Avec les notations de la définition \ref{définitions de k }, on obtient $\lambda(e,p)=0$ car $p\geq 5$ et $\beta(e,p)=1/2$.
La minoration de la hauteur que l'on a obtenue dans la remarque \ref{remarque que j'ai dû rajouter} montre donc que pour tout $x\in L^*\backslash \mu_\infty$, 
\begin{equation*}
\h(x)\geq \frac{\log(p/4)}{2(p^4+1)}.
\end{equation*}

Dans \cite[§ 5]{GalateauSmallheightinfieldsgeneratedbysingularmoduli}, l'auteur obtenait la minoration $(\log(p/2))/(p^2+1)$.
Cependant, il montra que cette minoration est valable non pas pour $L$, mais pour le compositum des $H(K_D)$ avec $D\in\mathcal{D}_0$ où 
\begin{equation*}
\mathcal{D}_0= \{D\in\N^* \; \vert \; D \; \text{est premier}, \; p\mathcal{O}_{K_{-D}} \; \text{est premier et}\; D\equiv 3 \mod 4\}.
\end{equation*}
Il doit supposer en plus que $D\equiv 3 \mod 4$ afin que $\Delta_{\Q(\sqrt{-D})}=-D$ et que $D=p$ est premier afin que les $\Delta_{\Q(\sqrt{-p})}$ soient deux à deux premiers entre eux.
}
\end{ex}

La différence entre nos deux minorations provient du fait que dans notre situation, $p$ se ramifie. 
Si $p$ ne se ramifie pas (comme c'est le cas si on considère l'ensemble des entiers $D$ non nuls et sans facteur carré tels que $p\mathcal{O}_{K_D}$ est premier), on a $e=1$, $E=1$ et $\beta(e,p)=1$. On obtient ainsi $(\log(p/2))/(p^2+1)$ comme minoration au lieu de $(\log(p/4))/(2(p^4+1))$.
Cette minoration a également été obtenue par Pottmeyer dans \cite[Theorem 2.3]{Smalltotally}. 

\begin{ex}
\rm{Dans \cite{SHANKSDANIELthesimplestcubicfields}, l'auteur étudie le corps de décomposition (\og simplest cubic field\fg) $K_m$ du polynôme 
\begin{equation*}
P_m(X)=X^3-mX^2-(m+3)X-1
\end{equation*}
avec $m\in\N$. 
Pour tout $m$, l'extension $K_m/\Q$ est galoisienne de degré $3$.
Ainsi, $K_m=\Q(x_m)$ où $x_m$ est une racine de $P_m$.
On a également que le discriminant $\rm{disc}(P_m)$ du polynôme $P_m$ vaut $(m^2+3m+9)^2$.

Montrons que $2$ est inerte dans tous les $K_m$.
Il est clair que $P_m$, vu comme un polynôme de $\F_2[X]$, est irréductible (c'est un polynôme de degré $3$ ne possédant pas de racine dans $\F_2$). 
Par conséquent, $P_m$ est irréductible dans $\Z_2[X]$.
Comme $K_m=\Q(x_m)$, on déduit d'un lemme de Kummer (cf \cite[Chapter 2, section 10]{FMJT2}) qu'il n'y a qu'un seul idéal premier au-dessus de $2$.
Comme $2$ ne se ramifie pas dans $K_m$ (car $\rm{disc}(P_m)$ est impair et que l'on a $\rm{disc(P_m)}=[\mathcal{O}_{K_m} : \Z[x_m]]^2\Delta_{K_m}$), il s'ensuit que $2$ est bien inerte dans $K_m$.

D'après le théorème \ref{version quantitative du thm principal.}, le compositum $L$ de tous les $H(K_m)$ a la propriété (B). 
Nous pouvons être plus explicite. 
Ici, on est dans le cas où $p=2$. 
Comme il n'y a pas de ramification, on a $e=1$ d'après \eqref{cas de ramification non sauvage} et $E=1$ d'après \eqref{expression de E sans sauvage}. 
On a également $d_n=3$ pour tout $n$. 
Par conséquent, $f=3$.
Enfin, avec les notations de la définition \ref{définitions de k }, on a $\lambda(e,p)=1$ et donc $\beta(e,p)=2$.
La minoration de la hauteur que l'on a obtenue montre donc que pour tout $x\in L^*\backslash \mu_\infty$, 
\begin{equation*}
\h(x)\geq \frac{\log 2}{18}.
\end{equation*}

\`A cause des restrictions nécessaires pour appliquer son théorème, Galateau s'intéresse aux corps $K_m$ dont l'anneau des entiers est $\Z[x_m]$, ce qui permet d'en déduire la relation $\rm{disc}(P_m)=\Delta_{K_m}$ et donc que $\Delta_{K_m}=(m^2+3m+9)^2$.
Il a ensuite montré qu'il existe un ensemble infini $\mathcal{N}$ tel que l'anneau des entiers de $K_m$, pour $m\in\mathcal{N}$, est $\Z[x_m]$ et tel que les $(\Delta_{K_m})_{m\in\mathcal{N}}$ sont deux à deux premiers entre eux.
 Il peut ainsi appliquer son théorème et obtenir que le compositum $L_0$ des $H(K_m)$ avec $m\in\mathcal{N}$ a la propriété (B).
Pour ce corps, il obtient la même minoration que la nôtre. }
\end{ex}

Notre théorème permet d'obtenir d'autres exemples.
Nous allons en voir deux.
Le premier ne contient que des extensions non sauvagement ramifiées tandis que le second en contiendra.

Le lecteur pourra remarquer que la minoration de la hauteur dans l'exemple \ref{exemple sans ramification sauvage} est de bien meilleure qualité que dans l'exemple \ref{exemple avec ramification sauvage}. 

\begin{ex} \label{exemple sans ramification sauvage}
\rm{Plaçons nous dans le cas où $p=3$.
Soit $S$ l'ensemble des corps de degrés $2$, $4$ ou $5$ tel que pour tout $K\in S$, il existe un unique idéal premier $\mathfrak{p}_K$ de $\mathcal{O}_K$ au-dessus de $3$. 
Notons $L$ le compositum de tous les $H(K)$ pour $K\in S$.
 
Comme la suite $([K_{\mathfrak{p}_K} : \Q_3])_{K\in S}$ est majorée par $5$, le théorème \ref{Krasner} montre que l'ensemble $\{K_{\mathfrak{p}_K}, K\in S\}$ est fini.
Notons $K_1,\dots,K_n$ les localisés $K_{\mathfrak{p}_K}$ avec $K\in S$.

 Soit $\nu$ une place de $\mathcal{O}_L$ au-dessus de $3$ ($\nu$ est donc au-dessus de chacun des $\mathfrak{p}_K$ par unicité de l'idéal premier au-dessus de $3$).
Comme $K_j/\Q_3$ est non sauvagement ramifié pour tout $j\leq n$, il découle de \eqref{cas de ramification non sauvage} que
\begin{equation*}
e=\ppcm(1,2,4,5)=20.
\end{equation*}
Comme $2\;3^{3-1}\leq 20 < 2\; 3^3$, on en déduit, avec les notations de la définition \ref{définitions de k }, que $\lambda(e,p)=3$ et donc que $\beta(e,p)=1,35$. 

 Enfin, $f(K_j\vert\Q_3)e(K_j\vert\Q_3)\leq 5$.
On obtient donc que 
\begin{equation*}
\begin{cases}
f(K_j \vert \Q_3)=1 \; \text{si} \; e(K_j\vert\Q_3)\geq 3 \\
f(K_j \vert \Q_3)\in\{1,2\} \; \text{si} \; e(K_j\vert\Q_3)= 2 \\
f(K_j \vert \Q_3)\leq 5 \; \text{si} \; e(K_j\vert\Q_3)=1 
\end{cases} .
\end{equation*}
 On déduit alors de \eqref{majoration du nombre N(e).} et du théorème \ref{Krasner} que $\mathcal{N}(2)\leq 2+2=4;\;\mathcal{N}(4)\leq 4$ et $\mathcal{N}(5)\leq 5$.
 Enfin, $a_n(2)=1$ et $a_n(q)=0$ pour les premiers impairs (cf \eqref{définition de a(q) r}).
  On en déduit que  
\begin{equation*}
f\leq \ppcm(2,4,5) \; 2^3 \; 4^3 \; 5^4 \; 2=20\;2^3\;4^3\;5^4\; 2= 1.28 \;10^7.
\end{equation*}

Ainsi, pour tout $x\in L^*\backslash\mu_\infty$, on a
\begin{equation*}
\h(x)\geq\frac{0.78}{3^{f+3}+3^3}\geq e^{-1.41 \; 10^7}. 
\end{equation*}
}
\end{ex}

\begin{ex} \label{exemple avec ramification sauvage}
\rm{Plaçons nous de nouveau dans le cas où $p=3$.
Soit $S$ l'ensemble des corps de degré $\leq 5$ tel que pour tout $K\in S$, il existe un unique idéal premier $\mathfrak{p}_K$ de $\mathcal{O}_K$ au-dessus de $3$. 
Notons $L$ le compositum de tous les $H(K)$ pour $K\in S$.
 
Comme la suite $([K_{\mathfrak{p}_K} : \Q_3])_{K\in S}$ est majorée par $5$, le théorème \ref{Krasner} montre que l'ensemble $\{K_{\mathfrak{p}_K}, K\in S\}$ est fini.
Notons $K_1,\dots,K_n$ les localisés $K_{\mathfrak{p}_K}$ avec $K\in S$.

 Notons $m$ le nombre d'extensions $K_j/\Q_3$ non sauvagement ramifiées.
Quitte à renuméroter, on peut supposer que ces $m$ extensions sont $K_1/\Q_3,\dots,K_m/\Q_3$.
Les extensions sauvagement ramifiées ont un indice de ramification égal à $3$. 
Ainsi, (cf \eqref{définition de Lambda r} pour la définition de $\Lambda_r$)
\begin{equation*}
\Lambda_{m+1}=\Lambda_{m+2}=\Lambda_n=\{1,\dots,5\}.
\end{equation*}
 Comme $f(L_j\vert\Q_3)e(L_j\vert\Q_3)\leq 5$, on en déduit que
\begin{equation*}
\begin{cases}
f(K_j \vert \Q_3)=1 \; \text{si} \; e(K_j\vert\Q_3)\geq 3 \\
f(K_j \vert \Q_3)\in\{1,2\} \; \text{si} \; e(K_j\vert\Q_3)= 2 \\
f(K_j \vert \Q_3)\leq 5 \; \text{si} \; e(K_j\vert\Q_3)=1 
\end{cases} .
\end{equation*}
Par conséquent, le \eqref{majoration du nombre N(e).} et le théorème \ref{Krasner} montrent que $\mathcal{N}(2)\leq 2+2=4,\;\mathcal{N}(3)\leq 21,\;\mathcal{N}(4)\leq 4$ et $\mathcal{N}(5)\leq 5$.
 Enfin, $a_{m+2}(2)=1$ et $a_{m+2}(q)=0$ pour les premiers impairs $q$.
  Par conséquent,   
\begin{equation*}
f\leq \ppcm(1,2,\dots,5) \frac{1}{5!}\; 2\;2^4\;3^{21}\;4^4\;5^5.
\end{equation*}
De plus, d'après \eqref{calcul de e cas de ramification sauvage.}, $\ppcm(1,2,\dots,5) \; 3^{\mathcal{N}(3)-1}$ est un majorant de la famille $(e_\nu(L \vert \Q))_{\nu \mid 3}$.
Il s'ensuit donc que $e_1=60 \;3^{20}$ est un majorant de la famille $(e_\nu(L \vert \Q))_{\nu \mid 3}$.

Avec les notations de la définition \ref{définitions de k }, on a $\lambda(e_1,p)=24$.
Ainsi, $\beta(e_1,p)=1,35$ et donc, pour tout $x\in L^*\backslash\mu_\infty$, on a
\begin{equation*}
\h(x)\geq\frac{0.78}{3^{f+24}+3^{24}}\geq e^{-3.6 \; 10^{18}}. 
\end{equation*}
}
\end{ex}

\section{Un problème de densité.}

Fixons un premier $q$.
Pour un corps $K$, on notera $\Delta_K$ son discriminant.
Dans \cite{GalateauSmallheightinfieldsgeneratedbysingularmoduli}, Galateau s'est intéressé à la densité de corps quadratiques tels que $q$ est inerte dans chacun de ces corps et tels que leurs discriminants sont deux à deux premiers entre eux, conditions nécessaires dans sa preuve.  
Comme un corps quadratique est de la forme $\Q(\sqrt{D})$ pour un certain entier $D$ non nul sans facteur carré, cela revient à déterminer la densité naturelle d'ensembles d'entiers sans facteur carré $D$ vérifiant les conditions de l'hypothèse ci-dessous, pour $q$ un premier fixé :

\begin{hyp} \label{hyp}
\begin{enumerate}[i)] ~
\item $q$ est inerte dans $\Q(\sqrt{D})$;
\item  les $\Delta_{\Q(\sqrt{D})}$ sont deux à deux premiers entre eux. 
\end{enumerate}
\end{hyp}

Cette densité valant $0$ puisque les $D$ doivent être deux à deux premiers entre eux, l'auteur considère plutôt la densité de Dirichlet des premiers $D$ vérifiant les conditions de l'hypothèse \ref{hyp}. 

Notons $\mathcal{P}$ l'ensemble des nombres premiers.
Soit $X$ un sous-ensemble de $\mathcal{P}$. Si la limite 
\begin{equation*}
\lim_{s\rightarrow 1^+}\frac{-1}{\log(s-1)}\sum_{p\in X} \frac{1}{p^s}
\end{equation*}
existe, alors on dira que $X$ admet une densité de Dirichlet, notée $\mathcal{D}(X)$, égale à la valeur de cette limite.
Par exemple, si $X$ est fini, alors $\mathcal{D}(X)=0$ et $\mathcal{D}(\mathcal{P})=1$ (théorème de Dirichlet).

Galateau a montré:

\begin{prop} (\cite[Lemma 3.1]{GalateauSmallheightinfieldsgeneratedbysingularmoduli})
Il existe un ensemble de premiers $\mathcal{P}_q$, de densité de Dirichlet égale à $\frac{1}{4}$, tel que les entiers $D:=-p$ avec $p\in\mathcal{P}_q$ vérifient les conditions de l'hypothèse \ref{hyp}.
\end{prop}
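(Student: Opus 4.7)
Le plan est de définir $\mathcal{P}_q$ par des conditions de congruence explicites sur $p$, puis d'appliquer le théorème de Dirichlet sur les premiers en progression arithmétique pour obtenir la densité $1/4$. La condition $p \equiv 3 \pmod 4$ règle immédiatement le point (ii) de l'hypothèse \ref{hyp} : en effet, si $p \equiv 3 \pmod 4$, alors $-p \equiv 1 \pmod 4$, donc $\Delta_{\Q(\sqrt{-p})} = -p$ ; pour deux premiers distincts $p_1,p_2 \equiv 3 \pmod 4$, les discriminants $-p_1$ et $-p_2$ sont alors premiers entre eux, puisqu'ils sont (au signe près) deux premiers distincts.

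Pour traduire le point (i), je distinguerais $q$ impair et $q=2$. Si $q$ est impair avec $q \neq p$, $q$ est inerte dans $\Q(\sqrt{-p})$ si et seulement si $\left(\frac{-p}{q}\right)=-1$. En écrivant $\left(\frac{-p}{q}\right)=\left(\frac{-1}{q}\right)\left(\frac{p}{q}\right)$ et en appliquant la réciprocité quadratique sous l'hypothèse $p\equiv 3 \pmod 4$, on obtient sans peine $\left(\frac{-p}{q}\right)=\left(\frac{q}{p}\right)$. Ce dernier symbole ne dépend que de $p \bmod q$ et prend la valeur $-1$ sur exactement $(q-1)/2$ classes résiduelles non nulles modulo $q$. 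Je poserais donc
\[
\mathcal{P}_q=\left\{p\in\mathcal{P}\;:\;p\equiv 3\pmod 4 \text{ et } \left(\frac{q}{p}\right)=-1\right\}.
\]
Pour $q=2$, on vérifie directement que $2$ est inerte dans $\Q(\sqrt{-p})$ (avec $-p\equiv 1\pmod 4$) si et seulement si $-p \equiv 5 \pmod 8$, soit $p\equiv 3\pmod 8$ ; cette unique congruence suffit alors.

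Le calcul de la densité se ramène ensuite à du comptage : par le théorème de Dirichlet, chaque progression arithmétique inversible $a \pmod m$ a une densité de Dirichlet égale à $1/\phi(m)$. Pour $q$ impair, $\mathcal{P}_q$ est la réunion de $(q-1)/2$ classes inversibles modulo $4q$, et comme $\phi(4q)=2(q-1)$, on trouve
\[
\mathcal{D}(\mathcal{P}_q)=\frac{(q-1)/2}{2(q-1)}=\frac{1}{4}.
\]
Pour $q=2$, l'unique classe modulo $8$ donne $\mathcal{D}(\mathcal{P}_2)=1/\phi(8)=1/4$.

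Je ne vois pas d'obstacle conceptuel : l'argument est essentiellement une application soignée de la réciprocité quadratique combinée au théorème de Dirichlet. Le seul point délicat est la traduction correcte de la condition « $q$ inerte » en une condition sur un symbole de Legendre compatible avec la normalisation $p \equiv 3 \pmod 4$ ; une fois cette réduction effectuée, le décompte des classes modulo $4q$ et l'obtention de la densité $1/4$ sont immédiats.
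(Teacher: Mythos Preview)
Your argument is correct: the congruence $p\equiv 3\pmod 4$ forces $\Delta_{\Q(\sqrt{-p})}=-p$, which makes the pairwise-coprimality condition automatic, and the translation of ``$q$ inerte'' into $\left(\frac{-p}{q}\right)=-1$ together with quadratic reciprocity reduces everything to a union of $(q-1)/2$ invertible classes modulo $4q$ (resp.\ one class modulo $8$ when $q=2$), whence the Dirichlet density $1/4$. One small presentational point: when you write that $\left(\frac{q}{p}\right)$ ``ne dépend que de $p\bmod q$'', this is true \emph{under} the standing hypothesis $p\equiv 3\pmod 4$ via the identity $\left(\frac{q}{p}\right)=\left(\frac{-p}{q}\right)$ you just proved, but it would be clearer to count classes directly from $\left(\frac{-p}{q}\right)=-1$, which manifestly depends only on $p\bmod q$.

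As for comparison with the paper: the paper does \emph{not} give a proof of this proposition. It is stated there only as a citation of \cite[Lemma~3.1]{GalateauSmallheightinfieldsgeneratedbysingularmoduli}, and the surrounding discussion immediately moves on to explain why the present article no longer needs condition~(ii). Your proposed proof is therefore not competing with anything in the paper itself; it is a self-contained reconstruction of Galateau's lemma, and the method you use (congruence conditions plus Dirichlet) is exactly the natural one and is what one finds in Galateau's original argument.
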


Dans notre situation, nous n'avons plus la condition $ii)$ de l'hypothèse \ref{hyp} qui était nécessaire dans \cite{GalateauSmallheightinfieldsgeneratedbysingularmoduli}. 
Cela nous permet de considérer la densité naturelle dont on rappelle ci-dessous la définition en toute généralité.

Le but de cette section est de montrer que dans certains cas, cette densité peut être strictement positive. 
Même si cela semble naturel, cela repose sur des résultats profonds. 

Pour $d\in\N^*$ et $n\in\N^*$, notons $N_n(d)$ le nombre de corps de degré $n$ sur $\Q$ et de discriminants bornés, en valeur absolue, par $d$.

\begin{defn}
Soit $E_n$ un ensemble de corps de degré $n$ fixé. On note $E_n(d)$ l'ensemble des corps de $E_n$ de discriminants bornés, en valeur absolue, par $d$. On dira que $E_n$ a une densité naturelle si $\frac{\#E_n(d)}{N_n(d)}$ a une limite quand $d$ tend vers $+\infty$ et on la notera, si elle existe, $d(E_n)$.

\end{defn}
Notons $\mathcal{I}(p,n)$ (resp. $\mathcal{R}(p,n)$) l'ensemble de tous les corps de degré $n$  dans lesquels $p$ est inerte (resp. totalement ramifié).

Supposons maintenant $p>2$.
L'existence (et donc le calcul) de $d(\mathcal{R}(p,n))$ et de $d(\mathcal{I}(p,n))$ est un problème non-trivial sauf pour le cas $n=2$ qui a été traité par Gauss. 
Pour le cas $n=3$, la preuve repose sur une propriété propre aux corps cubiques (cf \cite{DavenportHeilbronnOnthedensityofdiscriminantsofcubicfields}).
Les cas $n=4$ et $n=5$ ont été traités par Bhargava et reposent sur des résultats très profonds (cf \cite{BhargavaThedensityofdiscriminantsofquarticringsandfields} et \cite{BhargavaThedensityofdiscriminantsofquinticringsandfields}).
Pour la commodité du lecteur, nous rappelons ces résultats.

   \begin{thm} [Gauss]
  Pour $n=2$, on a   
\begin{equation*}  
  d(\mathcal{R}(p,2))=\frac{1}{p+1}
  \end{equation*}
   et   
\begin{equation*}   
   d(\mathcal{I}(p,2))=\frac{p}{2(p+1)}.
   \end{equation*}
   \end{thm}
  
\begin{thm}[Davenport-Heilbronn]

Pour $n=3$, on a 
\begin{equation*}
d(\mathcal{R}(p,3))=\frac{1}{p^2+1}
\end{equation*} 
et 
\begin{equation*}
d(\mathcal{I}(p,3))=\frac{p(p-1)}{3(p^2+1)}.
\end{equation*}
\end{thm}

\begin{thm}[Bhargava]

Pour $n=4$, 
\begin{equation*}
d(\mathcal{I}(p,4))=\frac{1}{4}\left(1-\frac{(p+1)^2}{p^3+p^2+2p+1}\right).
\end{equation*}
 Pour $n=5$, 
\begin{equation*} 
 d(\mathcal{I}(p,5))=\frac{1}{5}\left(1-\frac{(p+1)(p^2+p+1)}{p^4+p^3+2p^2+2p+1}\right).
 \end{equation*}
\end{thm}

Ces différents théorèmes montrent que la densité de corps $K_m$ de degré $n\leq 5$ fixé sur $\Q$ tel qu'il existe un unique idéal premier $\mathfrak{p}_m$ de $\mathcal{O}_{K_m}$ au-dessus de $p$ est strictement positive.
Cela montre que notre théorème principal permet de considérer beaucoup plus de corps que le théorème \ref{Gal(B)}.

De ces résultats, nous pouvons conjecturer que:

\begin{conj} \label{conj nono}
Pour tout $n$, pour tout $p$, $\mathcal{I}(p,n)$ a une densité naturelle et 
\begin{equation*}
\lim\limits_{p\rightarrow +\infty} d(\mathcal{I}(p,n))=\frac{1}{n}.
\end{equation*}
\end{conj}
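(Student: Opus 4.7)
The plan is to translate the global counting problem into local conditions at $p$ and to leverage asymptotic counts of number fields by discriminant. For each étale $\Q_p$-algebra $A$ of degree $n$, let $N_n(X;A)$ denote the number of degree-$n$ number fields $K/\Q$ with $|\Delta_K|\leq X$ and $K\otimes_\Q\Q_p\simeq A$. Then $d(\mathcal{I}(p,n))$ should be obtained by summing $N_n(X;A)/N_n(X)$ as $X\to\infty$ over those étale algebras $A$ for which $p$ is inert, which is exactly the unique unramified extension of $\Q_p$ of degree $n$.

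First I would establish existence of the ratios $\lim_X N_n(X;A)/N_n(X)$ for each admissible local type $A$. For $n\leq 5$ this follows from the theorems of Gauss, Davenport--Heilbronn, and Bhargava already cited, refined to incorporate prescribed local conditions at a finite set of primes (Bhargava's local mass formulae give precisely the required values). For general $n$ the existence is substantially more delicate: it is essentially a form of Malle's conjecture with imposed local behaviour at $p$, currently known only for restricted families.

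Next, to compute $\lim_{p\to\infty} d(\mathcal{I}(p,n))$, I would argue via a Chebotarev-type equidistribution heuristic. For a "generic" degree-$n$ field $K/\Q$ (Galois closure having group $S_n$), the decomposition type of $p$ in $K$ corresponds to the cycle type of a Frobenius element in $S_n$, and inertness corresponds to Frobenius being an $n$-cycle. Since the proportion of $n$-cycles in $S_n$ equals $(n-1)!/n!=1/n$, the expected limit is $1/n$. To make this precise, I would show that the ramified étale $\Q_p$-algebras contribute $O(1/p)$ to the total density: by Theorem \ref{Krasner} there are only finitely many ramified local algebras of degree $n$, and each contributes a mass that tends to zero with $p$ (as is already visible in the small-$n$ formulae, where $d(\mathcal{R}(p,n))\sim p^{-(n-1)}$). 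Combined with the conjecture that $100\%$ of degree-$n$ fields have Galois group $S_n$, only the unramified inert type remains in the limit, yielding $1/n$.

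The main obstacle is the existence statement for $n\geq 6$: it requires a form of Malle's conjecture with local conditions, which is beyond current techniques. The limit evaluation, by contrast, is essentially combinatorial once one knows (i) the existence of densities, (ii) the Bhargava conjecture that $S_n$-fields have density one among degree-$n$ fields, and (iii) that ramified local contributions die as $p\to\infty$. Thus a rigorous proof of Conjecture \ref{conj nono} would synthesise two major open problems on the distribution of number fields, and would likely require substantial new input beyond the geometry-of-numbers methods that underpin Bhargava's proofs for $n\leq 5$.
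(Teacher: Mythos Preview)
The statement you are addressing is labelled \emph{Conjecture} in the paper, not a theorem; the paper offers no proof and explicitly remarks immediately afterwards that ``Il n'existe pas de résultats connus pour $n\geq 6$.'' There is therefore nothing to compare your proposal against: the paper simply records the conjecture as a natural extrapolation from the cited results of Gauss, Davenport--Heilbronn, and Bhargava for $n\leq 5$.

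Your proposal is a coherent heuristic outline, and you are candid that it is not a proof. You correctly identify the two essential inputs that are currently missing for $n\geq 6$: (i) the existence of the local densities $\lim_X N_n(X;A)/N_n(X)$, which amounts to a version of Malle's conjecture with prescribed local behaviour, and (ii) the conjecture that $S_n$-fields have density one among degree-$n$ fields. Both are wide open in general, so your final paragraph is accurate: a rigorous proof would require substantial new input. One small comment: even granting (i) and (ii), the passage from ``Frobenius is an $n$-cycle with probability $1/n$ in each fixed $S_n$-field'' to ``the density of fields in which $p$ is inert tends to $1/n$'' involves an interchange of the limit in $p$ with the averaging over fields, and this step would itself need justification (a uniform error term in an effective Chebotarev, or a direct computation of the local masses as in Bhargava's formulae). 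In short, your proposal is a reasonable roadmap, but it is a roadmap to an open problem, exactly as the paper presents it.
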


Il n'existe pas de résultats connus pour $n\geq 6$.
 Del Corso et Dvornicich ont étudié dans \cite{DelCorsoIlariaandDvornicichRoberto} un autre type de densité.

Notons $\Phi_n(N)\subset \Z[X]$ l'ensemble des polynômes irréductibles de degré $n$ fixé dont les coefficients sont majorés en valeur absolue par $N>0$. 
Soit $\Phi_n\subset\Z[X]$ un ensemble de polynômes irréductibles de degré $n$.
Si la limite
\begin{equation*}
\lim_{N\rightarrow +\infty}\frac{\mid \Phi_n\cap\Phi_n(N)\mid}{\mid \Phi_n(N)\mid }
\end{equation*}
existe, on dit que $\Phi_n$ admet une densité naturelle, notée $D(\Phi_n)$, égale à la valeur de cette limite.

Soient $e,f\in\N^*$.  
Notons $\mathcal{A}(p;(e,1))$ (resp. $\mathcal{A}(p; (1,f))$), l'ensemble des polynômes unitaires et irréductibles $g$ de degré $e$ (resp. $f$) tels que $p$ est totalement ramifié  (resp. inerte) dans $\Q[X]/(g(X))$.
On a alors 

\begin{thm}[\cite{DelCorsoIlariaandDvornicichRoberto}, Main Theorem]
Il existe deux fonctions rationnelles $\phi_e(X)$, $\; \phi_f(X)\in\Q(X)$ telles que $D(\mathcal{A}(p,(e,1))$ existe et vaut $\phi_e(p)$ pour tout premier $p$ premier à $e$ et telles que $D(\mathcal{A}(p,(1,f))$ existe et vaut $\phi_f(p)$ pour tout premier $p$.
\end{thm}

Le lecteur intéressé pourra consulter \cite{DelCorsoIlariaandDvornicichRoberto} pour des résultats plus généraux. 

\section{Appendice.} \label{appendice}

Dans cet appendice, nous allons montrer le théorème \ref{inertiecompositum}.
Pour une extension $L/K$ de corps locaux, les deux diagrammes  
 \begin{equation*}
 \xymatrix@C=4pc@R=4pc{
L\ar@{-}[d]^(0.3){e}^(0.6){f} \\
 K }\;\text{et} \; \xymatrix@C=4pc@R=4pc{
L\ar@{-}[d]^(0.5){d} \\
 K }
\end{equation*} 
  signifient que $e=e(L\vert K)$, $f=f(L\vert K)$ et $d=[L : K]$.
  
  Fixons un nombre premier $p$ et une extension finie $F/\Q_p$. 
  Soient $n\geq 2$ et $K_1/F,\dots,K_n/F$ des extensions deux à deux distinctes.
Posons $e_i=e(K_i\vert F)$ et $f_i=f(K_i\vert F)$.

Commençons par majorer le degré d'inertie d'un compositum de deux corps.

\begin{prop} \label{inertie2corps}
On a
\begin{equation*}
f(K_1K_2\vert F)\leq \ppcm(f_1,f_2)\;\pgcd(e_1, e_2).
\end{equation*}
\end{prop}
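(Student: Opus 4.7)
L'idée est de séparer dans chaque $K_i/F$ la partie non ramifiée maximale et la partie totalement ramifiée, puis d'étudier le compositum au-dessus de la partie non ramifiée commune. Plus précisément, on notera $M_i$ l'unique sous-extension non ramifiée maximale de $K_i/F$, de sorte que $[M_i : F]=f_i$ et $K_i/M_i$ soit totalement ramifiée de degré $e_i$. On posera ensuite $M = M_1 M_2$. Comme le compositum d'extensions non ramifiées d'un corps local est non ramifié, $M/F$ sera non ramifiée de degré $\ppcm(f_1,f_2)$ ; c'est de plus une extension galoisienne.

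Il s'agira ensuite de montrer que $K_iM/M$ est totalement ramifiée de degré $e_i$. D'une part, $M\cap K_i$ est une sous-extension de $K_i/F$ non ramifiée (car contenue dans $M$) et contenant $M_i$, donc vaut $M_i$ par maximalité. Comme $M/F$ est galoisienne, $[K_iM : K_i]=[M : M_i]=\ppcm(f_1,f_2)/f_i$, et par multiplicativité des degrés, $[K_iM:M]=e_i$. D'autre part, $K_iM/K_i$ est non ramifiée (composition avec une extension non ramifiée), ce qui donne
$$f(K_iM\vert F)=f_i\cdot[K_iM:K_i]=\ppcm(f_1,f_2)=[M:F],$$
donc $f(K_iM\vert M)=1$, et $K_iM/M$ sera bien totalement ramifiée de degré $e_i$.

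Pour conclure, on posera $L=K_1K_2=K_1M\cdot K_2M$. On disposera alors de la majoration évidente $[L:M]\leq[K_1M:M]\cdot[K_2M:M]=e_1e_2$ et de la divisibilité $\ppcm(e_1,e_2)\mid e(L\vert M)$ (car $e_i=e(K_iM\vert M)$ divise $e(L\vert M)$). Il viendra donc
$$f(L\vert M)=\frac{[L:M]}{e(L\vert M)}\leq\frac{e_1e_2}{\ppcm(e_1,e_2)}=\pgcd(e_1,e_2),$$
puis, en multipliant par $f(M\vert F)=\ppcm(f_1,f_2)$, l'inégalité souhaitée $f(K_1K_2\vert F)\leq\ppcm(f_1,f_2)\,\pgcd(e_1,e_2)$. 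Le point le plus subtil sera l'identification $M\cap K_i=M_i$ et la préservation de la ramification totale dans $K_iM/M$ ; ces deux propriétés reposent sur le comportement bien connu des extensions non ramifiées de corps locaux par composition, ainsi que sur le caractère galoisien de $M/F$.
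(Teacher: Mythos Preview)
Your proof is correct and follows the same overall strategy as the paper: reduce to the compositum $M=K_1^{nr}K_2^{nr}$ of the maximal unramified subextensions, show that each $K_iM/M$ is totally ramified of degree $e_i$, and then bound $f(K_1K_2\vert M)$ by $\pgcd(e_1,e_2)$ via $[K_1K_2:M]\leq e_1e_2$ and $\ppcm(e_1,e_2)\mid e(K_1K_2\vert M)$.

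Your execution is, however, more economical than the paper's. The paper draws a large diagram involving $K_1K_2^{nr}$, $K_1^{nr}K_2$ and their intersection, invokes Abhyankar's lemma to see that $K_iK_j^{nr}/K_i$ is unramified, and isolates a separate claim that the degrees $[K_i^{nr}:K_1^{nr}\cap K_2^{nr}]$ are coprime in order to compute $f(M\vert F)=\ppcm(f_1,f_2)$. You bypass all of this: the identification $M\cap K_i=M_i$ together with the Galois property of $M/F$ gives $[K_iM:M]=e_i$ directly, the base-change stability of unramified extensions replaces Abhyankar, and the standard fact that the unique unramified extension of degree $d$ exists makes $[M:F]=\ppcm(f_1,f_2)$ immediate. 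Both arguments reach the same inequality $f(K_1K_2\vert M)\leq e_1e_2/\ppcm(e_1,e_2)$, but yours gets there with fewer intermediate objects.
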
   
 
 \begin{proof}
 Notons $K_i^{nr}$ l'extension maximale non-ramifiée de $F$ incluse dans $K_i$.
 Par \cite[chapitre 3, théorème 25]{FMJT}, $K_i/K_i^{nr}$ est totalement ramifiée de degré $e(K_i\vert K_i^{nr})= e_i$ et donc $f(K_i\vert K_i^{nr})=1$. 
 Par conséquent, $f(K_i^{nr}\vert F)=f_i$.

 Considérons le diagramme suivant: 
 \begin{equation*}
\xymatrix@C=4pc@R=4pc{
 & K_1K_2^{nr}\ar@{-}[dr]^(0.3){e_{1,5}}^(0.6){f_{1,5}}\ar@{-}[dl]^(.3){e_{1,4}}^(0.6){f_{1,4}} & & K_1^{nr}K_2\ar@{-}[dr]^(0.3){e_{2,4}}^(0.6){f_{2,4}}\ar@{-}[dl]^(.3){e_{2,5}}^(0.6){f_{2,5}} & \\
 K_1\ar@{-}[dr]^(0.3){e_1}^(0.6){1} & & K_1^{nr}K_2^{nr}\ar@{-}[dr]^(0.3){e_{2,3}}^(0.6){f_{2,3}}\ar@{-}[dl]^(.3){e_{1,3}}^(0.6){f_{1,3}} && K_2\ar@{-}[dl]^(0.3){e_2}^(0.6){1} \\
 & K_1^{nr}\ar@{-}[dr]^(0.3){1}^(0.6){f_{1,1}}\ar@{-} & & K_2^{nr}\ar@{-}[dl]^(.3){1}^(0.6){f_{2,1}} & \\
 & & K_1^{nr}\cap K_2^{nr}\ar@{-}[d]^(0.3){1}^(0.6){f_{1,0}} & \\
 & & F & }
\end{equation*}

 Comme le compositum de deux extensions non ramifiées est non ramifié, on a $e_{1,3}=e_{2,3}=1$.
  Comme l'extension $K_1^{nr}K_2^{nr}/K_i^{nr}$ est non ramifiée, on déduit du théorème \ref{Abhyankar} que $K_1K_2^{nr}/K_1$ est non ramifié et donc $e_{1,4}=1$. 
  Par conséquent, 
   \begin{equation*}
   e_{1,5}=e_{1,5}\;e_{1,3}=e_{1,4}\;e_1=e_1.
\end{equation*}   
  De même, $ e_{2,4}=1$ et $e_{2,5}=e_2$.
   
   Calculons maintenant les $f_i$. Cela se fait en utilisant le lemme suivant:
   
   \begin{claim*} 
   On a $\pgcd(f_{1,1},f_{2,1})=1$.
   \end{claim*}
   
   \begin{proof}
  Les trois extensions
   \begin{equation*}
   K_1^{nr}/K_1^{nr}\cap K_2^{nr},~ K_2^{nr}/K_1^{nr}\cap K_2^{nr}~\text{et} ~ K_1^{nr}K_2^{nr}/K_1^{nr}\cap K_2^{nr}
   \end{equation*}
    sont non-ramifiées. 
    Ce sont donc des extensions cycliques. Or,  
   \begin{equation} \label{isomorphsime}
\Gal(K_1^{nr}K_2^{nr}/K_1^{nr}\cap K_2^{nr})\simeq \Gal(K_1^{nr}/K_1^{nr}\cap K_2^{nr})\times \Gal(K_2^{nr}/K_1^{nr}\cap K_2^{nr}).    
   \end{equation}

On a donc un groupe cyclique qui est isomorphe à un produit cartésien de deux groupes cycliques. Il en résulte donc que le cardinal de $\Gal(K_1^{nr}/K_1^{nr}\cap K_2^{nr})$ et de $\Gal(K_2^{nr}/K_1^{nr}\cap K_2^{nr})$ sont premiers entre eux. Ceci prouve le fait.

   \end{proof}
   
Comme $f_1=f_{1,1}f_{1,0}$ et $f_2=f_{2,1}f_{1,0}$, on déduit du fait que $f_{1,0}=\pgcd(f_1, f_2)$ et $\ppcm(f_{1,1}, f_{2,1})=f_{1,1}f_{2,1}$. 
En passant aux cardinaux dans \eqref{isomorphsime}, on en déduit que $f_{1,3}f_{1,1}=f_{1,1}f_{2,1}$ et donc que $f_{1,3}=f_{2,1}$.

De l'inégalité $[K_1K_2^{nr} : K_1^{nr}K_2^{nr}] \leq [K_1 : K_1^{nr}]$, il en résulte que 
\begin{equation*}   
  e_1\; f_{1,5}= e_{1,5}\;f_{1,5}\leq e_1
\end{equation*}  
   et donc $f_{1,5}=1$. Enfin, 
\begin{equation*}   
    f_{1,4}=f_{1,3}\;f_{1,5}=f_{2,1}. 
\end{equation*}    
    Par symétrie, on a également $f_{2,4}=f_{2,3}=f_{1,1}$ et $f_{2,5}=1$.
   
\'A l'aide des valeurs calculées, on peut maintenant majorer le degré d'inertie $f(K_1K_2 \vert K_1^{nr}K_2^{nr})$. 
Considérons le diagramme ci-dessous :   
\begin{equation*}
\xymatrix@C=4pc@R=4pc{
 & & K_1K_2\ar@{-}[dr]^(0.5){D_2\leq d_1}\ar@{-}[dl]^(.5){D_1\leq d_2} && \\
 & K_1K_2^{nr}\ar@{-}[dr]^(0.5){d_1}\ar@{-}@/_2 pc/[drd]^(0.5){e_1} & & K_1^{nr}K_2\ar@{-}[dl]^(.5){d_2}\ar@{-}@/^2 pc/[dld]^(0.5){e_2} & \\
 & & K_1K_2^{nr}\cap K_1^{nr}K_2\ar@{-}[d] & \\
 & & K_1^{nr}K_2^{nr} & }
\end{equation*}
On remarque que $[K_1K_2 : K_1^{nr}K_2^{nr}]\leq d_2\; e_1$.
Il s'ensuit donc que:
\begin{equation*}
f(K_1K_2\vert K_1^{nr}K_2^{nr})\leq  \frac{d_2\; e_1}{e(K_1K_2\vert K_1^{nr}K_2^{nr})} .
\end{equation*}

Or, $\ppcm(e_1, e_2)\leq e(K_1K_2\vert K_1^{nr}K_2^{nr})$ et $d_2\leq e_2$. 
Ainsi,
\begin{equation*}
f(K_1K_2\vert K_1^{nr}K_2^{nr})\leq   \frac{e_1\; e_2}{\ppcm(e_1, e_2)}=\pgcd(e_1, e_2).
\end{equation*}

 Donc, $f(K_1K_2\vert F)\leq f_{1,1}f_{2,1}\pgcd(e_1,e_2)$ et la proposition s'ensuit puisque
\begin{align*}
 \pgcd (f_1,f_2)\;f_{1,1}\;f_{2,1} &= f_{1,0}f_{1,1}f_{2,1} \\
 & = \frac{f_1f_2}{\pgcd(f_1,f_2)}=\ppcm(f_1,f_2).
 \end{align*}      
    Ceci termine la preuve de la proposition \ref{inertie2corps}.  
\end{proof}      
Pour prouver le théorème \ref{inertiecompositum}, nous avons également besoin du lemme arithmétique élémentaire suivant.

\begin{lmm} \label{cor avec trop de calculs}
Soient $l\geq 2$ un entier et $a_1,...,a_l\in\N^*$. Alors 
\begin{equation} \label{l idée de Francesco en passant à la valuation}
\prod_{i=1}^{l-1} \pgcd(\ppcm(a_1,...,a_i),a_{i+1})=\underset{j=1,...,l}{\pgcd}\left(\prod\limits_{\underset{i\neq j}{i=1}}^{l}a_i\right).
\end{equation}
\end{lmm}

\begin{proof}
Soit $q$ un nombre premier. 
Afin d'alléger les notations, notons $P(i)=\max\{v_q(a_1),\dots,v_q(a_i)\}$ pour tout entier $i$.
En passant à la valuation $v_q$ dans \eqref{l idée de Francesco en passant à la valuation}, on en déduit que pour montrer le lemme, il suffit de montrer que 
\begin{equation} \label{mon hypothèse de récurrence}
\sum_{i=1}^{l-1} \min\{P(i), v_q(a_{i+1})\}=\sum_{i=1}^l v_q(a_i)-P(l)
\end{equation}
Il est clair que 
\begin{equation} \label{encore une simplification}
\min\{P(i), v_q(a_{i+1})\}+\max\{P(i), v_q(a_{i+1})\}=P(i)+v_q(a_{i+1}).
\end{equation}
Or, $\max\{P(i), v_q(a_{i+1})\}=P(i+1)$. 
Ainsi, en sommant chaque terme de l'égalité \eqref{encore une simplification} pour $i$ allant de $1$ à $l-1$, il s'ensuit que \[\sum_{i=1}^{l-1} \min\{P(i), v_q(a_{i+1})\}+\sum_{i=1}^{l-1} (P(i+1)-P(i))=\sum_{i=1}^{l-1} v_q(a_{i+1}).\] 
Comme $P(1)=v_q(a_1)$, un rapide calcul permet d'en déduire \eqref{mon hypothèse de récurrence}. Ceci termine la preuve du lemme. 
 
\end{proof}

Quitte à permuter les corps $K_1,\dots,K_n$, on peut supposer que $K_1/F,..., K_m/F$ sont non sauvagement ramifiés et que $K_{m+1}/F,...,K_n/F$ sont sauvagement ramifiés.

Montrons maintenant la proposition suivante : 

\begin{prop} \label{prop inutile}
On a
\begin{equation*}
f(K_1...K_n\vert F)\leq\ppcm(f_1,f_2,...,f_n) E
\end{equation*}
où
\begin{equation*}
E=\underset{j=1,\dots,n}{\pgcd}\left(\prod_{\underset{i\neq j}{i=1}}^n e_i\right)
\end{equation*}

si $m\geq n-2$ et
\begin{equation*}
E = \underset{j=1,\dots,m+2}{\pgcd}\left(\prod_{\underset{i\neq j}{i=1}}^{m+2} e_i\right)\prod_{i=m+3}^n e_i
\end{equation*}
si $m< n-2$. 

\end{prop}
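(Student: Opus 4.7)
I would prove the intermediate bound
\begin{equation*}
f(K_1 \cdots K_n \vert F) \leq \ppcm(f_1, \ldots, f_n) \prod_{j=1}^{n-1} \pgcd\bigl(e(L_j \vert F), e_{j+1}\bigr),
\end{equation*}
where $L_j := K_1 \cdots K_j$, by induction on $n$ using Proposition \ref{inertie2corps}, and then specialise the right-hand side using Abhyankar's lemma together with Lemma \ref{cor avec trop de calculs}.

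\textbf{Inductive bound on the residual degree.} The base case $n=2$ is Proposition \ref{inertie2corps}. For the induction step, I would apply that same proposition to the pair $L_{n-1}/F$ and $K_n/F$ to obtain
\begin{equation*}
f(L_n \vert F) \leq \ppcm\bigl(f(L_{n-1} \vert F), f_n\bigr) \cdot \pgcd\bigl(e(L_{n-1} \vert F), e_n\bigr).
\end{equation*}
The key observation, which makes the induction work, is that $\ppcm(f_1, \ldots, f_{n-1})$ \emph{divides} $f(L_{n-1} \vert F)$: indeed the compositum $K_1^{nr} \cdots K_{n-1}^{nr}$ is an unramified extension of $F$ of degree $\ppcm(f_1,\ldots,f_{n-1})$ contained in $L_{n-1}$, hence contained in the maximal unramified sub-extension of $L_{n-1}/F$ whose degree over $F$ is $f(L_{n-1}\vert F)$. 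Since $\ppcm$ is monotone with respect to divisibility, this divisibility and the inductive hypothesis together yield
\begin{equation*}
\ppcm\bigl(f(L_{n-1}\vert F), f_n\bigr) \leq f(L_{n-1}\vert F) \cdot \frac{\ppcm(f_1, \ldots, f_n)}{\ppcm(f_1,\ldots,f_{n-1})},
\end{equation*}
and the claimed bound on $f(L_n \vert F)$ follows.

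\textbf{Evaluation of the ramification indices.} Abhyankar's lemma (Theorem \ref{Abhyankar}) gives $e(LL' \vert F) = \ppcm(e(L \vert F), e(L' \vert F))$ whenever one of the two factors has ramification index coprime to $p$. Since $K_1, \ldots, K_m$ are tamely ramified, a direct induction yields $e(L_j \vert F) = \ppcm(e_1, \ldots, e_j)$ for all $j \leq m$, and the same formula persists at $j = m+1$ because $L_m$ is still tamely ramified so Abhyankar applies. For $j \geq m+2$ both factors $L_{j-1}$ and $K_j$ may be wild and I would simply use the trivial bound $\pgcd(e(L_j \vert F), e_{j+1}) \leq e_{j+1}$.

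\textbf{Collapsing the product.} Splitting $\prod_{j=1}^{n-1} \pgcd(e(L_j\vert F), e_{j+1})$ at the index $j = m+1$, the tame segment becomes $\prod_{j=1}^{\min(m+1,n-1)} \pgcd(\ppcm(e_1,\ldots,e_j), e_{j+1})$, to which Lemma \ref{cor avec trop de calculs} applies with $a_i = e_i$ and $l = \min(m+2, n)$, producing exactly the $\pgcd$ of complementary products appearing in the statement. The wild segment contributes $\prod_{i=m+3}^n e_i$ when it is non-empty. The case split $m \geq n-2$ versus $m < n-2$ simply records whether this second product is empty or not, and matches the two expressions of $E$ given in the proposition. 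The main technical subtlety I expect is the induction step, precisely because $\ppcm$ is not monotone with respect to $\leq$ in general — it is monotone only with respect to divisibility, which is exactly why the observation $\ppcm(f_1,\ldots,f_{n-1}) \mid f(L_{n-1} \vert F)$ is indispensable.
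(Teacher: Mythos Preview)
Your proposal is correct and follows essentially the same route as the paper: apply Proposition~\ref{inertie2corps} iteratively to the chain $L_j = K_1\cdots K_j$, use Abhyankar's lemma to identify $e(L_j\vert F)=\ppcm(e_1,\dots,e_j)$ for $j\le m+1$, bound the remaining wild factors trivially by $e_{j+1}$, and collapse the tame product via Lemma~\ref{cor avec trop de calculs}. The only organisational difference is in the $\ppcm$ manipulation at the induction step: the paper extracts from the \emph{proof} of Proposition~\ref{inertie2corps} the factorisation $F_i=\ppcm(F_{i-1},f_i)\,F'_i$ and then peels off the $F'_i$ using $\ppcm(a\,a_1,a_2,\dots)\le a\,\ppcm(a_1,\dots)$, whereas you use only the \emph{statement} of Proposition~\ref{inertie2corps} together with the direct divisibility $\ppcm(f_1,\dots,f_{n-1})\mid f(L_{n-1}\vert F)$ coming from the unramified compositum $K_1^{nr}\cdots K_{n-1}^{nr}$ --- a slightly cleaner way to handle the non-monotonicity issue you correctly flag.
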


\begin{proof}
 Pour tout $i$, notons $F_i:= f(K_1K_2...K_i\vert F)$ et $E_i:= e(K_1K_2...K_i\vert F)$. 
 D'après la proposition \ref{inertie2corps} où l'on prend le compositum $K_1\dots K_{i-1}$ pour corps $K_1$ et $K_i$ pour corps $K_2$, on a, pour tout $i\geq 2$,  
\begin{equation*} 
 F_i=\ppcm(F_{i-1},f_i) F'_i
 \end{equation*}
  avec  
  \begin{equation*}
  F'_i\in\{1,...,\pgcd(E_{i-1}, e_i)\}.
\end{equation*}
  Ainsi,   
  \begin{align*}
F_n & \leq \ppcm(F_{n-1},f_n)\pgcd(E_{n-1},e_n) \\
 & = \ppcm(\ppcm(F_{n-2},f_{n-1})\; F'_{n-1},f_n)\pgcd(E_{n-1},e_n).
\end{align*}
  De plus, pour tous entiers $n$ et $a,a_1,...,a_n\in\N^*$,  
\begin{equation*}  
  \ppcm(a\;a_1,a_2,...,a_n)\leq a\;\ppcm(a_1,...,a_n).
  \end{equation*}
On en déduit alors que
\begin{align*}
F_n & \leq F'_{n-1}\;\ppcm(F_{n-2}, f_{n-1}, f_n)\pgcd(E_{n-1},e_n) \\
 & \leq \ppcm(F_{n-2}, f_{n-1}, f_n) \pgcd(E_{n-2}, e_{n-1})\pgcd(E_{n-1},e_n)
\end{align*}
car $F'_{n-1}\leq\pgcd(E_{n-2}, e_{n-1})$.
Par récurrence descendante, 
\begin{equation*}
F_n\leq \ppcm(f_1,...,f_n)\prod\limits_{i=2}^n \pgcd(E_{i-1}, e_i).
\end{equation*}

Supposons $m\geq n-2$. 
Par le théorème \ref{Abhyankar}, $E_k=\ppcm(e_1,...,e_k)$ pour tout $k\leq n-1$. 
Il s'ensuit que 
\begin{equation*}
F_n\leq \ppcm(f_1,...,f_n)\prod\limits_{i=2}^n \pgcd(\ppcm(e_1,...,e_{i-1}), e_i)
\end{equation*}
et la première assertion du théorème se déduit du lemme \ref{cor avec trop de calculs}.

Supposons maintenant que $m< n-2$. 
Remarquons que \[\prod_{i=2}^n \pgcd(E_{i-1}, e_i)=\prod_{i=2}^{m+2} \pgcd(E_{i-1}, e_i)\prod_{i=m+3}^n \pgcd(E_{i-1}, e_i).\]
D'après le théorème \ref{Abhyankar}, on a $E_k=\ppcm(e_1,\dots,e_k)$ pour tout $k\leq m+1$.
On déduit ainsi du lemme \ref{cor avec trop de calculs} que
\begin{equation*}
F_n\leq \ppcm(f_1,...,f_n)\underset{j=1,...,m+2}{\pgcd}\left(\prod\limits_{\underset{i\neq j}{i=1}}^{m+2}e_i\right)  \prod\limits_{i=m+3}^{n}\pgcd(E_{i-1}, e_i)
\end{equation*}
et la seconde assertion du théorème s'ensuit car $\pgcd(E_{i-1}, e_i)\leq e_i$. 
\end{proof}

Pour la commodité du lecteur, rappelons les différentes notations utilisées pour le théorème \ref{inertiecompositum}.

Pour tout $r\in\{1,\dots,n\}$, notons
\begin{equation*}
\Lambda_r =\{e_1,\dots,e_r\}. 
\end{equation*}
Pour $e\in\Lambda_n$, notons $\mathcal{N}(e)$ le nombre d'extensions $K_i/F$ d'indice de ramification  $e$.
Enfin, pour tout premier $q$, notons
\begin{equation*}  
a_r(q):=\left(\sum\limits_{e\in\Lambda_r} v_q(e)\right)-\underset{e\in\Lambda_r}{\max}\{v_q(e)\}.
\end{equation*}

\begin{thm*} 
Si $n=m$, alors $e(K_1\dots K_n\vert F)=\ppcm(e_1,\dots,e_n)$. 
Si $m<n$, alors 
\begin{equation*}
e(K_1\dots K_n\vert F)\leq \ppcm(e_1,\dots,e_{m+1})  e_{m+1}^{\mathcal{N}(e_{m+1})-1}\prod_{e\in\Lambda_n\backslash\Lambda_{m+1}} e^{\mathcal{N}(e)}.
\end{equation*}
On a également
\begin{equation*}
f(K_1...K_n\vert F)\leq\ppcm(f_1,f_2,...,f_n) E
\end{equation*}
où
\begin{equation*}
E=\prod\limits_{e\in\Lambda_n} e^{\mathcal{N}(e)-1}\prod\limits_{q\in\mathcal{P}} q^{a_n(q)}
\end{equation*}
si $m\geq n-2$ et
\begin{equation*}
E = \prod\limits_{e\in\Lambda_{m+2}}e^{-1}\prod\limits_{q\in\mathcal{P}} q^{a_{m+2}(q)}\prod\limits_{e\in\Lambda_n} e^{\mathcal{N}(e)} 
\end{equation*}
si $m< n-2$. 
\end{thm*}

\begin{proof}

Rappelons que la valeur de $e(K_1\dots K_n\vert F)$ dans le cas $n=m$ a déjà été montrée juste après l'énoncé du théorème \ref{inertiecompositum}.

Montrons la majoration de l'indice de ramification dans le cas où $m<n$.
Par le théorème \ref{Abhyankar}, l'indice de ramification du compositum de toutes les extensions $K_i/F$ non-sauvagement ramifiées et de $K_{m+1}$ est  
 \begin{equation*}
\ppcm(e_1,\dots,e_{m+1}).
 \end{equation*}
 Par ailleurs, l'indice de ramification d'un compositum de corps est majoré par le produit des indices de ramification.
Ainsi, l'indice de ramification du compositum de toutes les extensions sauvagement ramifiées sauf $K_{m+1}$ est majoré par
\begin{equation} \label{calcul de e.}
e_{m+1}^{\mathcal{N}(e_{m+1})-1}\prod_{e\in\Lambda_n\backslash\Lambda_{m+1}} e^{\mathcal{N}(e)},
\end{equation} 
ce qui montre la majoration souhaitée de $e(K_1\dots K_n\vert F)$. 

Montrons maintenant la majoration du degré d'inertie. 
Le théorème \ref{inertiecompositum} devient maintenant une réécriture de la proposition précédente. 

Pour tout $e\in\N$ et tout $r\in\{1,\dots,n\}$, notons $\mathcal{N}_r(e)$ le nombre d'extensions $K_1/F,\dots K_r/F$ dont l'indice de ramification vaut $e$. 

 Fixons $r$ et $e\in\Lambda_r$. 
 Soit $q$ un nombre premier. 
Posons $A=\underset{j=1,...,r}{\pgcd}\left(\prod\limits_{\underset{i\neq j}{i=1}}^{r}e_i\right)$.
Alors $v_q(A)=\sum_{i=1}^r v_q(e_i)-\max\{v_q(e_1),\dots,v_q(e_r)\}$. 
Par définition de $\mathcal{N}_r(e)$, on obtient que \[\sum_{i=1}^r v_q(e_i)=\sum_{e\in\Lambda_r} \mathcal{N}_r(e)v_q(e).\] 
Un rapide calcul montre que $v_q(A)=\sum_{e\in\Lambda_r} (\mathcal{N}_r(e)-1)v_q(e)+a_r(q)$. 
Il en résulte donc que
 \begin{equation} \label{expression du E}
\underset{j=1,...,r}{\pgcd}\left(\prod_{\underset{i\neq j}{i=1}}^{r}e_i\right)=\prod\limits_{e\in\Lambda_r} e^{\mathcal{N}_r(e)-1}\prod\limits_{q\in\mathcal{P}} q^{a_r(q)}.
\end{equation}   

Supposons que $m\geq n-2$. 
De le proposition \ref{prop inutile}, on a $E=\underset{j=1,\dots,n}{\pgcd}\left(\prod_{\underset{i\neq j}{i=1}}^n e_i\right)$.
En prenant $r=n$ dans \eqref{expression du E}, on en déduit la majoration de $f(K_1\dots K_n\vert F)$ souhaitée.

Supposons maintenant que $m<n-2$. 
Dans ce cas, on a \[E = \underset{j=1,\dots,m+2}{\pgcd}\left(\prod_{\underset{i\neq j}{i=1}}^{m+2} e_i\right)\prod_{i=m+3}^n e_i.\]
En prenant cette fois-ci $r=m+2$ dans \eqref{expression du E}, on en déduit que 
\begin{equation*}
E=\prod\limits_{e\in\Lambda_{m+2}} e^{\mathcal{N}_{m+2}(e)-1}\prod\limits_{q\in\mathcal{P}} q^{a_{m+2}(q)}\prod_{i=m+3}^n e_i.
\end{equation*}

Comme $\prod\limits_{e\in\Lambda_{m+2}} e^{\mathcal{N}_{m+2}(e)}\prod_{i=m+3}^n e_i=\prod_{e\in\Lambda_n} e^{\mathcal{N}(e)}$, cela montre la majoration souhaitée de $f(K_1\dots K_n\vert F)$, ce qui achève la preuve du théorème \ref{inertiecompositum}.

\end{proof}

\begin{rqu}
Si on suppose en plus que les extensions $(K_i/F)_{i=1}^n$ soient galoisiennes, alors on peut remplacer "inférieur ou égal" par "divise" dans le théorème \ref{inertiecompositum} du fait que l'indice de ramification d'un compositum fini d'extensions galoisiennes de $F$ divise le produit des indices de ramification.
\end{rqu}

Soient $K_1/F, \dots, K_n/F$ comme dans le théorème \ref{inertiecompositum}. 
Pour un entier $i\leq n$, notons $e_i=e(K_i\vert F)$ et $f_i=f(K_i\vert F)$. 
Supposons que

\begin{enumerate} [i)]
\item $K_1\dots K_i/F$ et $K_{i+1}/F$ sont linéairement disjoints pour tout $i$;
\item $K_i/F$ est non sauvagement ramifié pour tout $i$; 
\item $e_i=e_j$ ou $\pgcd(e_i,e_j)=1$ pour tous $i,j$. 
\end{enumerate}
 Alors dans le théorème \ref{inertiecompositum}, notre majoration du degré d'inertie est une égalité.  
En effet, il s'ensuit de la condition $i)$ que \[ [K_1\dots K_n : F]=\prod_{i=1}^n [K_i : F]=\prod_{i=1}^n e_if_i.\]

De plus, les extensions $K_i/F$ sont non sauvagement ramifiées. 
 Le théorème \ref{Abhyankar} permet donc d'en déduire que 
\begin{equation} \label{exempledfdf}
e(K_1\dots K_n\vert F)=\ppcm(e_1,\dots,e_n).
\end{equation} 
 
De la condition $iii)$, on obtient que $a_n(q)=0$ pour tout premier $q$. 
De plus, la condition $iii)$ et \eqref{exempledfdf} permettent d'en déduire que \[e(K_1\dots K_n\vert F)=\prod_{e\in\Lambda_n} e.\]     

Comme $[K_1\dots K_n : F]=e(K_1\dots K_n\vert F)f(K_1\dots K_n\vert F)$, on en déduit que \[f(K_1\dots K_n \vert F)=\prod_{e\in\Lambda_n} e^{\mathcal{N}(e)-1}\prod_{i=1}^n f_i.\]
Le théorème \ref{inertiecompositum} permet d'en déduire que \[ \prod_{i=1}^n f_i \leq \ppcm(f_1,\dots,f_n).\]
Cela montre donc que $\prod_{i=1}^n f_i=\ppcm(f_1,\dots,f_n)$.
Ainsi, la majoration du degré d'inertie du théorème \ref{inertiecompositum} est, dans ce cas, une égalité. 
 
Nous terminons cet appendice avec deux exemples où l'on compare les bornes obtenues en utilisant d'un côté le corollaire \ref{corollaire immédiat du thm de Krasner} et de l'autre le théorème \ref{inertiecompositum}. 
L'un contiendra de la ramification sauvage et l'autre non.

\begin{ex}
\rm{Prenons $p=11$ et $\{ K_1,\dots, K_n\}$ l'ensemble des extensions de $\Q_{11}$ de degré plus petit que $10$ (c'est bien un ensemble fini d'après le théorème \ref{Krasner}). 
Notons $K$ le compositum des $K_n$. 
Pour $i\in\{2,...,10\}$, le théorème \ref{Krasner} montre que $\mathcal{N}_{\Q_{11},i}=\sum\limits_{d\mid i} d$. 
Ainsi, la majoration donnée par le corollaire \ref{corollaire immédiat du thm de Krasner} montre que
\begin{equation*}
f(K\vert\Q_{11})\leq\prod\limits_{i=1}^{10} i^{\sum\limits_{d\mid i} d}\leq 1,9.10^{71}.
\end{equation*}

Calculons, avec le théorème \ref{inertiecompositum}, une majoration plus précise de $f(K\vert \Q_{11})$. 
Comme il n'y a pas de ramification sauvage, on est dans le cas où $m=n>m-2$. 
Comme il existe une unique extension non ramifiée de $\Q_{11}$ de degré $f$, que l'on note $\Q_{11}\{f\}$, il s'ensuit que le nombre d'extensions de $\Q_{11}$ de degré $ef$ et de degré d'inertie $f$ est, d'après le théorème \ref{Krasner}, $\mathcal{N}_{\Q_{11}\{f\}, e}^{(r)}=e$.
Dans notre situation, $f\leq 10e^{-1}$ et donc, le nombre d'extensions dont le degré de ramification vaut $e$ est $e\lfloor 10/e\rfloor$.
Rappelons que pour tout premier $q$, 
\begin{equation*}
a_n(q)=-\max\{v_q(1),\dots,v_q(10)\}+\sum\limits_{e=1}^{10} v_q(e).
\end{equation*}

 Ainsi, $a_n(2)=5,\;a_n(3)=2,\; a_n(5)=1$ et $a_n(q)=0$ pour les autres valeurs de $q$. Il en résulte donc que
\begin{equation*}
f(K\vert\Q_{11})\leq \ppcm(1,\dots,10)\;2^5\; 3^2\; 5 \prod\limits_{e=1}^{10} e^{e\;\lfloor\frac{10}{e}\rfloor-1}\leq 3,3.10^{56}.
\end{equation*}
}

\end{ex}

Traitons le second exemple.

\begin{ex}
\rm{Prenons $p=5$ et $\{K_1,\dots, K_n\}$ l'ensemble des extensions de $\Q_{5}$ de degré plus petit que $10$. Notons $K$ le compositum des $K_n$. Le théorème \ref{Krasner} montre que $\mathcal{N}_{\Q_5,5}=106$ et $\mathcal{N}_{\Q_5,10}=1818$. Ainsi, en utilisant la majoration donnée par le corollaire \ref{corollaire immédiat du thm de Krasner}, on en déduit que
 \begin{equation*}
f(K\vert\Q_{5})\leq 5^{106}\;10^{1818}\; \prod_{\underset{i\neq 5}{i=1}}^9 i^{\sum_{d\mid i} d}\leq 1,5.10^{1941}.
\end{equation*}

Calculons maintenant un majorant de $f(K\vert \Q_5)$ à l'aide du théorème \ref{inertiecompositum}. 
Notons $m$ le nombre d'extensions $K_j/\Q_5$ non sauvagement ramifiées.
On a $m<n-2$.
D'après l'exemple précédent, $\mathcal{N}(e)=e \lfloor 10/e\rfloor$ si $e\notin\{ 5,10\}$. 
Pour $e=5$, on a $f=1$ ou $f=2$.
Ainsi, d'après \eqref{majoration du nombre N(e).}, on a $\mathcal{N}(5)\leq 105+605=710$.
De même, si $e=10$, alors $f=1$ et on en déduit que $\mathcal{N}(10)\leq 1210$. 
On en déduit donc (en prenant $e_{m+1}=e_{m+2}=10$) que $a_{m+2}(2)=8-3=5,\; a_{m+2}(3)=4-2=2$ et $a_{m+2}(q)=0$ pour les autres valeurs de $q$. On en conclut donc que: 
\begin{equation*}
f(K \vert \Q_5)\leq \ppcm(1,\dots,10)\;(10!)^{-1}\;5\;2^5\;3^2\; 5^{710}\;10^{1210}\prod\limits_{\underset{e\neq 5}{e=1}}^9 e^{e\;\lfloor\frac{10}{e}\rfloor}\leq 6,2.10^{1745}.
\end{equation*}
}
\end{ex}

\bibliographystyle{plain}

\end{document}